\title{Klein-four connections and the Casson invariant\\ for non-trivial admissible $U(2)$ bundles}
\author{Christopher Scaduto \& Matthew Stoffregen}
\date{}
\newcommand{\R}{\mathbb{R}}
\newcommand{\Z}{\mathbb{Z}}
\newcommand{\Q}{\mathbb{Q}}
\newcommand{\su}{\mathfrak{s}\mathfrak{u}}
\newcommand{\adE}{\su(E)}
\newcommand{\CE}{\mathcal{C}_E}
\newcommand{\CadE}{\mathcal{C}_{\adE}}
\newcommand{\tr}{\text{Tr}}
\newcommand{\G}{\mathcal{G}}
\newcommand{\GE}{\mathcal{G}_E}
\newcommand{\GadE}{\mathcal{G}_{\adE}}
\newcommand{\BE}{\mathcal{B}_E}
\newcommand{\BadE}{\mathcal{B}_{\adE}}
\newcommand{\M}{\mathcal{M}} 
\newcommand{\Mpflat}{\M}
\newcommand{\ad}{\text{ad}}
\newcommand{\func}{f}
\newtheorem{prop}{Proposition}
\newtheorem{theorem}{Theorem}
\newtheorem{lemma}{Lemma}
\newtheorem{defn}{Definition}
\newtheorem{corollary}{Corollary}
\newtheorem{remark}{Remark}
\begin{document}

\maketitle 

\begin{abstract}
Given a rank 2 hermitian bundle over a 3-manifold that is non-trivial admissible in the sense of Floer, one defines its Casson invariant as half the signed count of its projectively flat connections, suitably perturbed. We show that the 2-divisibility of this integer invariant is controlled in part by a formula involving the mod 2 cohomology ring of the 3-manifold. This formula counts flat connections on the induced adjoint bundle with Klein-four holonomy.
\end{abstract}

\section{Introduction}
Let $E$ be a $U(2)$ bundle over a closed, oriented and connected 3-manifold $Y$ with the property that $w_2(E)$ has no torsion lifts to $H^2(Y;\Z)$. Following Floer \cite{floer}, we call such bundles \emph{non-trivial admissible}. Floer defined the instanton homology $I_\ast(Y,E)$, which is an abelian group that is $\Z_2$-graded. Define $\lambda(Y,E)$ to be half the euler characteristic of the instanton homology:
\[
	\lambda(Y,E)=\frac{1}{2}\chi\left[ I_\ast(Y,E)\right].
\]
This number is a signed count of suitably perturbed projectively flat connections on $E$. It is well-known that $\lambda(Y,E)$ is an integer. Define the subset of triples
\[
	V_Y = \{ \{a,b,c\} \subset H^1(Y;\Z_2): \; a + b + c =0\}.
\]
This set is naturally in correspondence with the set of subspaces of the $\Z_2$-vector space $H^1(Y;\Z_2)$ of dimension at most two. Write $b_1(2)$ for the $\Z_2$-dimension of $H_1(Y;\Z_2)$. Define for any given $x\in H^2(Y;\Z_2)$ the following non-negative integer:
\[
    v_Y(x)=\left|\Big\{\{a,b,c\}\in V_Y:\; ab+bc+ac = x \Big\}\right|.
\]
For the case in which $x=w_2(E)$ we simply write $v_Y(E)$.\\
\vspace{.25cm}

\begin{theorem}\label{thm:main}
	Suppose $E$ is a non-trivial admissible $U(2)$ bundle over a closed, oriented, connected 3-manifold $Y$ with $b_1(2)\geq 3$. Then $\lambda(Y,E)$ is divisible by $2^{b_1(2)-3}$. Furthermore, we have
\begin{equation}
	2^{3-b_1(2)}\lambda(Y,E) \equiv v_Y(E) \mod 2.\label{eq:cong}
\end{equation}
If $b_1(2)=2$, this congruence also holds, implying that $v_Y(E)$ is even. If $b_1(2)=1$, then the integer $v_Y(E)$ is zero. In these two cases $v_Y(E)$ {\emph{(mod 2)}} yields no information about $\lambda(Y,E)$.
\end{theorem}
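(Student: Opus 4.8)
The plan is to express $2\lambda(Y,E)=\chi(I_\ast(Y,E))$ as a signed count of points in a compact $0$-dimensional moduli space carrying a natural action of $\Gamma:=H^1(Y;\Z_2)\cong\Z_2^{b_1(2)}$, and to read the divisibility off the $\Gamma$-orbit structure. I would work, following Floer's construction, in the configuration space $\BadEdet$ of connections on $\adE$ modulo the determinant-one gauge group $\GadEdet\subseteq\GadE$ attached to the $U(2)$-bundle $E$; write $q\colon\GadE\to\Gamma$ for the quotient map. Because $\pi_1(SU(2))=0$ while $\pi_1(SO(3))=\Z_2$, the quotient $\GadE/\GadEdet$ is canonically $H^1(Y;\Z_2)$, so $\Gamma$ acts on $\BadEdet$, hence on the perturbed flat moduli space $\M\subseteq\BadEdet$ (a generic holonomy perturbation is automatically $\Gamma$-equivariant), and $2\lambda(Y,E)$ is the $\Gamma$-invariant signed count of $\M$. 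That this count is even --- equivalently, that $\lambda$ is an integer --- is the quoted known fact, and it is precisely what licenses the final division by $2$.

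I would then classify $\Gamma$-orbits through their gauge stabilizers. For $[\alpha]\in\M$ with holonomy $\rho\colon\pi_1(Y)\to SO(3)$ one has $\mathrm{Stab}_{\GadE}(\alpha)\cong C_{SO(3)}(\rho(\pi_1))$, and admissibility excludes every flat connection with positive-dimensional stabilizer: such a connection forces $\adE\cong\R\oplus L$ with $L$ a flat $U(1)$-bundle, so $w_2(E)=c_1(L)\bmod 2$ would have the torsion lift $c_1(L)$. A short argument about subgroups of $SO(3)$ then leaves only the stabilizers $1$, $\Z_2$, and the Klein four-group $V_4$, with $\mathrm{Stab}\cong V_4$ occurring exactly when $\rho(\pi_1)$ is conjugate to $V_4$, i.e.\ for the Klein-four connections. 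The decisive step, which I would carry out cohomologically, is that $V_4\cong\mathrm{Stab}_{\GadE}(\alpha)$ maps \emph{injectively} into $\Gamma$, onto the two-dimensional subspace spanned by the triple $\{a,b,c\}$ that the three coordinate axes of $V_4$ assign to $\rho$. Consequently $V_4\cap\GadEdet=\ker(q|_{V_4})$ is trivial, so a Klein-four connection is an honest (trivial-stabilizer) point of $\BadEdet$, nondegenerate after a generic perturbation, and its $\Gamma$-orbit consists of $2^{b_1(2)}/4=2^{b_1(2)-2}$ such points; since $\Gamma$ acts by gauge transformations it preserves the orientation defining the signs, so these points all carry the same sign and the orbit contributes exactly $\pm 2^{b_1(2)-2}$ to $2\lambda(Y,E)$. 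Every orbit through an irreducible or a $\Z_2$-stabilizer reducible, on the other hand, has size divisible by $2^{b_1(2)-1}$ and so contributes $0$ modulo $2^{b_1(2)-1}$.

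Summing over all orbits, we obtain
\[
2\lambda(Y,E)\equiv 2^{b_1(2)-2}N\pmod{2^{b_1(2)-1}},
\]
where $N$ is the number of Klein-four orbits in $\M$; here the signs have dropped out because $\pm 2^{b_1(2)-2}\equiv 2^{b_1(2)-2}\pmod{2^{b_1(2)-1}}$. It remains to identify $N$ with $v_Y(E)$. A Klein-four connection with $\rho(\pi_1)=V_4$ determines, via its three axes, an unordered triple $\{a,b,c\}\subset H^1(Y;\Z_2)$ with $a+b+c=0$ and $ab+bc+ac=w_2(\adE)=w_2(E)$; $SO(3)$-conjugacy accounts exactly for relabeling the axes (the normalizer of $V_4$ in $SO(3)$ surjects onto $S_3$), so these orbits are in bijection with such triples. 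Moreover a degenerate triple --- one with a repeated entry --- has $ab+bc+ac=a\cup a$, which admits a torsion integral lift (the integral Bockstein of $a$) and hence cannot equal $w_2(E)$ for admissible $E$; thus only non-degenerate triples contribute and $N=v_Y(E)$. This gives $2\lambda(Y,E)\equiv 2^{b_1(2)-2}v_Y(E)\pmod{2^{b_1(2)-1}}$. For $b_1(2)\geq 3$ this yields both $2^{b_1(2)-3}\mid\lambda(Y,E)$ and the congruence (\ref{eq:cong}); for $b_1(2)=2$ it reads $2\lambda(Y,E)\equiv v_Y(E)\pmod 2$, forcing $v_Y(E)$ even and giving no information on $\lambda$; and for $b_1(2)=1$ there is no two-dimensional subspace of $H^1(Y;\Z_2)$, hence no non-degenerate triple, while admissibility removes the degenerate ones, so $v_Y(E)=0$.

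I expect the main obstacle to be the gauge-theoretic bookkeeping rather than the algebra: pinning down precisely the configuration space and $\Gamma$-action that compute $\lambda$; checking that a generic $\Gamma$-equivariant perturbation renders every perturbed flat connection nondegenerate --- in particular that the Klein-four connections, which have trivial stabilizer in $\GadEdet$ although their $\GadE$-stabilizer is $V_4$, behave for transversality exactly like irreducibles; and verifying that the $\Gamma$-action preserves the orientations defining the signs. The two algebraic facts doing the real work are that $\mathrm{Stab}_{\GadE}(\alpha)\cong V_4$ maps isomorphically onto the two-plane $\langle a,b,c\rangle\subset H^1(Y;\Z_2)$, and that a flat connection on $\adE$ has positive-dimensional stabilizer precisely when $w_2(E)$ admits a torsion integral lift.
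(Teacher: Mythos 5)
Your skeleton is the same as the paper's (the $H^1(Y;\Z_2)$-action on the determinant-fixed moduli space, stabilizers $1,\Z_2,V_4$, orbit counting, and the identification of Klein-four orbits with triples $\{a,b,c\}$), but two steps that carry real weight are justified incorrectly or not at all. First, the constancy of signs along each orbit. You write that ``since $\Gamma$ acts by gauge transformations it preserves the orientation defining the signs,'' but the elements of $\Gamma=\GadE/\GadEdet$ acting on your configuration space are precisely \emph{not} gauge transformations of the structure used to define it (equivalently, on the $U(2)$ side the action is by tensoring with flat $\Z_2$ line bundles, not by elements of $\GE$), and whether this action preserves the orientation of the determinant line is exactly the nontrivial point. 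The paper handles it with the relative grading-shift formula of Braam--Donaldson, $\mathrm{gr}(w\cdot[A])-\mathrm{gr}[A]\equiv 4\left(w_2(E)w+w^3\right) \bmod 8$, which is even, so every orbit lies in a single $\Z_2$-grading. Without this, your bookkeeping fails for \emph{all} orbit types: an orbit of size $2^{b_1(2)}$ or $2^{b_1(2)-1}$ with mixed signs need not contribute $0$ modulo $2^{b_1(2)-1}$, and a Klein-four orbit need not contribute $\pm 2^{b_1(2)-2}$.

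Second, perturbations. A generic holonomy perturbation is \emph{not} automatically $H^1(Y;\Z_2)$-equivariant; equivariance is arranged by taking the perturbation solid tori to be mod-2 trivial (nullhomologous mod 2), as in Ruberman--Saveliev. More seriously, your remark that Klein-four connections ``behave for transversality exactly like irreducibles'' because their stabilizer in $\GadEdet$ is trivial misses the actual difficulty: once you restrict to equivariant perturbations, such perturbations cannot move the Klein-four connections at all (they remain in every perturbed moduli space), and at these points the first-order variations available equivariantly vanish, so nondegeneracy must be produced by the Hessian in the normal directions to the $V_4$-stratum. Establishing that enough equivariant perturbations exist for this (``second-order abundancy'' at $V_4$-stabilizer points) is the main technical content of the paper's proof: one passes to the $V_4$-covering $Y'\to Y$ on which the connection pulls back to the trivial connection, invokes Herald's abundancy at the trivial connection, and checks surjectivity of the induced map on invariant symmetric bilinear forms. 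Your proposal flags transversality as an obstacle but supplies no argument for it, and as stated the two claims above (automatic equivariance, Klein-four points behaving like irreducibles) are false, so this is a genuine gap rather than routine bookkeeping.
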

\vspace{.25cm}

\noindent Note that $Y$ supports a non-trivial admissible bundle if and only if $b_1(Y)\geq 1$, where $b_1(Y)$ denotes the rank of $H_1(Y;\Z)$. In general we have $b_1(2)\geq b_1(Y)$, with strict inequality if and only if $H_1(Y;\Z)$ has 2-torsion. Theorem \ref{thm:main} and its proof are generalizations of a rather simple idea due to Ruberman and Saveliev \cite{rs1}. Their result is the case of Theorem \ref{thm:main} when $H_1(Y;\Z)$ is free abelian of rank 3, i.e., when $Y$ is a homology 3-torus. To obtain their statement, one identifies $v_Y(E)$ with the triple cup product modulo 2, which for a homology 3-torus is a simple computation. (More generally, see the corollary below.) Our adaptation of Ruberman and Saveliev's argument is summarized, modulo perturbations, as follows. 

The invariant $\lambda(Y,E)$ is one half of a signed count of projectively flat connections on the bundle $E$. There is an action of $H^1(Y;\Z_2)$ on this set of connections, and the quotient is identified with flat connections on the adjoint $SO(3)$ bundle induced by $E$. The only possible stabilizers of this action are $\{1\}$, $\Z_2$, and $V_4$, the Klein-four group isomorphic to $\Z_2\times \Z_2$. Further, the connections with stabilizer $V_4$ are flat connections with holonomy group $V_4$. The number $v_Y(E)$ is the number of connections on the induced $SO(3)$ bundle with holonomy $V_4$, up to gauge equivalence. The proof of Theorem \ref{thm:main} follows from counting the $H^1(Y;\Z_2)$-orbits with stabilizer $V_4$.\\

\vspace{.25cm}
\noindent {\bf{Vanishing conditions, and relation to Lescop's invariant.}}
The quantity $v_Y(E)$ (mod 2) of congruence (\ref{eq:cong}) is often, but not always, equal to zero. The parity also turns out to be independent of our choice of non-trivial admissible bundle $E$. To state the result,
\[
	k(Y) := \dim_{\Z_2} \{ a\in H^1(Y;\Z_2): \; a^2 =0 \} = \dim_{\Z_2}\text{ker}(\beta^1).
\]
Here $\beta^1$ is the Bockstein homomorphism defined on $H^1(Y;\Z_2)$ associated to the coefficient exact sequence $0 \to \Z_2 \to \Z_4 \to \Z_2\to 0$. As is well-known, $\beta^1(a)=a^2$. We note that if $H_1(Y;\Z)$ is written as a direct sum of prime-power order cyclic summands and copies of $\Z$, then $k(Y)$ is just the number of $\Z_{2^k}$ summands with $k>1$, plus the number of $\Z$ summands. In particular, $k(Y) \geq b_1(Y)$.\\

\begin{theorem}
	Let $Y$ be a closed, oriented and connected 3-manifold with $k(Y)\geq 1$. Let $x\in H^2(Y;\Z_2)$ be any element that is not a cup-square. Then $v_Y(x)$ {\emph{(mod 2)}} is independent of the choice of such $x$. If furthermore $k(Y) \geq 4$ then we have $v_Y(x)\equiv 0$ {\emph{(mod 2)}}.\label{thm:2}\\
\end{theorem}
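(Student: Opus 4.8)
For the independence part, the plan is to turn $v_Y(x)\bmod 2$ into linear algebra. Write $H=H^1(Y;\Z_2)$ and recall that $\beta^1\colon H\to H^2(Y;\Z_2)$, $\beta^1(a)=a^2$, is linear with kernel of dimension $k(Y)$. Since $x$ is not a cup-square, no subspace of $H$ of dimension at most one can realise it (those contribute $0$ or an $a^2\in\operatorname{im}\beta^1$), so $v_Y(x)=\#\{W\subseteq H:\ \dim_{\Z_2}W=2,\ Q(W)=x\}$, where $Q(\langle a,b\rangle):=a^{2}+ab+b^{2}$ (one checks this is independent of the basis; equivalently it is the $w_2$ of the flat $SO(3)$ bundle attached to the surjection $\pi_1Y\to V_4$ determined by $W$, though we will not need that). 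Thus the theorem concerns the fibres over non-cup-squares of the map $Q$ from the Grassmannian of $2$-planes in $H$ to $H^2(Y;\Z_2)$.

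I would then count incident pairs $(a,W)$ with $0\ne a\in W$ and $Q(W)=x$: fixing $a\ne 0$, the $2$-planes $W\ni a$ with $Q(W)=x$ correspond two-to-one (via $b\mapsto b+a$) to solutions of $T_a(b)=x+a^{2}$, where $T_a\colon H\to H^2(Y;\Z_2)$, $T_a(b)=b^{2}+a\cup b$, is linear and kills $a$. Hence $3\,v_Y(x)=\sum_{a\ne 0}\#\{W\ni a:Q(W)=x\}$, each summand is odd precisely when $\dim\ker T_a=1$ and $x+a^{2}\in\operatorname{im}T_a$, and (using $3\equiv1$) $v_Y(x)\equiv\#\{a\ne 0:\dim\ker T_a=1,\ x+a^{2}\in\operatorname{im}T_a\}\pmod 2$. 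For $a$ in the set $A$ of nonzero vectors with $\dim\ker T_a=1$, Poincar\'e duality (which gives $\dim H^2(Y;\Z_2)=b_1(2)$ and $H^2(Y;\Z_2)^{*}\cong H$) makes $\operatorname{im}T_a$ a hyperplane with a unique nonzero conormal functional $\eta_a$, the membership condition becomes $\eta_a(x)=\eta_a(a^{2})$, and a short computation sorting on the value $\eta_a(a^{2})$ yields
\[
	v_Y(x)\ \equiv\ |B_0|+\zeta(x)\pmod 2,\qquad B_0:=\{a\in A:\eta_a(a^{2})=0\},\quad \zeta:=\sum_{a\in A}\eta_a .
\]
Since $\zeta$ is linear and the non-cup-squares form the union of the cosets of $\operatorname{im}\beta^1$ other than $\operatorname{im}\beta^1$ itself, the independence follows once one shows $\zeta$ vanishes on $\operatorname{im}\beta^1$, i.e.\ that $\#\{a\in A:u^{2}\notin\operatorname{im}T_a\}$ is even for every $u\in H$; when $k(Y)\ge 2$ the non-cup-squares span $H^2(Y;\Z_2)$, so this forces $\zeta=0$ outright. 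This parity statement is the first main obstacle, and I would attack it by producing, for each fixed $u$, a fixed-point-free involution of $\{a\in A:u^{2}\notin\operatorname{im}T_a\}$ built from the substitution $a\mapsto a+u$, checking along the way that it preserves the rank condition on $T_a$.

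For the vanishing when $k(Y)\ge 4$, note $\operatorname{im}\beta^1\subsetneq H^2(Y;\Z_2)$, so non-cup-squares exist, and by the first part it suffices to treat a single one; by the displayed formula this reduces to $|B_0|$ being even. Here I would work directly on the Grassmannian, exploiting the large subspace $K=\ker\beta^1$: for a nonzero $\alpha\in K$ and a $2$-plane $W$ with $\alpha\notin W$ and $\dim(\alpha\cup W)=1$, putting $\langle a\rangle=W\cap\ker(\alpha\cup(-))$, the assignment $W\mapsto\langle a,\,b+\alpha\rangle$ (for any $b\in W$ outside $\langle a\rangle$) is a well-defined fixed-point-free involution preserving $Q$, the identity $\alpha^{2}=0$ being exactly what cancels the cross terms. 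Consequently $v_Y(x)$ is congruent mod $2$ to the number of $W$ with $Q(W)=x$ that either lie in the radical $N_\alpha=\ker(\alpha\cup(-))$, or contain $\alpha$ with $\dim(\alpha\cup W)=1$, or avoid $\alpha$ with $\dim(\alpha\cup W)=2$. One then eliminates the first two families by repeating the construction with further elements of $K$, and handles the last family by a further involution (e.g.\ after passing to a second generator of $K$); the hypothesis $k(Y)\ge 4$ ought to provide exactly the room this iteration requires. I expect the bookkeeping for these residual families---and checking that $k(Y)\ge 4$ is precisely what the argument uses---to be the most delicate point, and a cleaner route may be to recast the whole vanishing argument as an induction over subspaces $N\subseteq H$ carrying the restricted cup product, lowering $\dim N$ at each step until the count is visibly zero.
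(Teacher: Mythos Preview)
Your approach is entirely different from the paper's. The paper does \emph{not} argue algebraically with the $\Z_2$-cohomology ring at all; indeed the authors say explicitly that they ``did not see how to provide a general algebraic proof of Theorem \ref{thm:2}''. Instead, they use a transfer argument: given $(Y,x)$ with $k(Y)\ge 1$ and $x$ not a cup-square, Turaev's realization theorem produces a 3-manifold $Z$ with the same $\Z_2$-cohomology ring (hence the same $v_{\,\cdot\,}(x)$) but with the relevant $\Z_{2^k}$-summands replaced by copies of $\Z$, so that $b_1(Z)\ge 4$ and $x$ has no torsion lift. Then Theorem \ref{thm:main} together with Poudel's identification of $\lambda$ with Lescop's invariant (Corollary \ref{cor:main}) gives $v_Z(x)\equiv 0$; independence is handled the same way. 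So the paper's proof routes through gauge theory and deep 3-manifold results, while you are trying to do it by hand---which, if it worked, would be a genuine improvement.

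Your reduction $v_Y(x)\equiv |B_0|+\zeta(x)\pmod 2$ is correct and attractive. But the remaining steps are not proofs, only plans, and there are real obstacles. First, a logical slip: showing $\zeta|_{\operatorname{im}\beta^1}=0$ gives independence only when $k(Y)=1$ (a single nontrivial coset); for $k(Y)\ge 2$ a linear $\zeta$ vanishing on $\operatorname{im}\beta^1$ can still separate cosets, so ``non-cup-squares span'' does not by itself force $\zeta=0$. Second, the proposed involution $a\mapsto a+u$ on $\{a\in A:u^2\notin\operatorname{im}T_a\}$ is not verified: you must check it preserves both $\dim\ker T_a=1$ and $u^{2}\notin\operatorname{im}T_a$, and rule out the potential fixed point $a=u$. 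Since $T_{a+u}=T_a+(u\cup{-})$, none of this is automatic. Third, the $k(Y)\ge 4$ vanishing is left as a sequence of hoped-for involutions with, in your own words, ``delicate bookkeeping'' still to do.

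One concrete suggestion that may help you push the algebra through: using the Wu-type identity $u(c,b,b)=u(c,c,b)$ for the triple form, the annihilator of $\operatorname{im}T_a$ under the Poincar\'e pairing is exactly $\ker T_a$; hence when $\dim\ker T_a=1$ your conormal $\eta_a$ is (the PD of) $a$ itself. This gives $\zeta=\sum_{a\in A}a\in H$ and $B_0=\{a\in A:\,a^3[Y]=0\}$, which is a much more tractable starting point for the involutions you have in mind.
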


\noindent Note that the statement holds for a larger class of elements $x\in H^2(Y;\Z_2)$ than those just coming from admissible bundles. The conditions are best understood through examples. The simplest interesting examples are certain surgeries on the Borromean rings, see Figure \ref{fig:borromean}. These examples are chosen such that $b_1(Y)=0$ and $b_1(2)=3$.\\

\noindent {\textbf{Example 1.}} Consider the 3-manifold $Y$ obtained by doing $(2,2,4)$ surgery on the Borromean rings. Such a manifold has first homology group isomorphic to $\Z_{2}\oplus \Z_{2}\oplus \Z_{4}$. Then $k(Y)=1$. The rank 3 vector space $H^1(Y;\Z_2)$ has basis $a,b,c$ with 
\[
	c^2=0, \quad a^2=bc, \quad b^2 =ac,
\]
and for which $ab,bc,ac$ form a basis for $H^2(Y;\Z_2)$. Now, $ab$ is not a square, as are not $ab+bc$, $ab+ac$ or $ab+ac+bc$. All four of these elements have $v_Y(x)=1\in \Z$. On the other hand, all other elements in $H^2(Y;\Z_2)$ have $v_Y(x)\in \{0,2,4\}$. This illustrates the necessity of the non-square condition on $x$.\\

\noindent {\textbf{Example 2.}} Next, consider $(2,4,4)$ surgery on the Borromean rings. The $\Z_2$-cohomology ring is much the same as before, except now $b^2=0$, and $k(Y)=2$. All non-zero $x\in H^2(Y;\Z_2)$ have $v_Y(x)$ odd. In fact, if $x\neq 0$, then $v_Y(x)=1$, while  $v_Y(a^2)=5$ and $v_Y(0)=4$. Here $a^2$ is a cup-square, but does not have a different parity from the other non-zero elements.\\

\noindent {\textbf{Example 3.}} Finally, the (4,4,4) surgery on the Borromean rings has the same $\Z_2$-cohomology ring as that of the 3-torus. Here $k(Y)=3$. In this case $v_Y(x)=1$ for $x\neq 0$, all of which are not squares, while $v_Y(0)=8$.\\

\begin{figure}[t]
% \centering
% \begin{tikzpicture}[scale=2]
% \begin{knot}[
%     clip width=4,
%     ]
%     \strand [ultra thick, Green] (0,0) circle (1.0cm);
%     \strand [ultra thick, Purple] (1,0) circle (1.0cm);
%     \strand [ultra thick, 	NavyBlue] (0.5,1) circle (1.0cm);
%     \flipcrossings{1, 2, 5, 6}
% \end{knot}
% 	\node[] (n1) at (-0.10,2.10) {\bf{$j$}};
% 	\node[] (n2) at (-1.25,-0.45) {\bf{$k$}};
% 	\node[] (n3) at (2.3,-0.45) {\bf{$l$}};
% \end{tikzpicture}
\centering
\includegraphics[scale=.55]{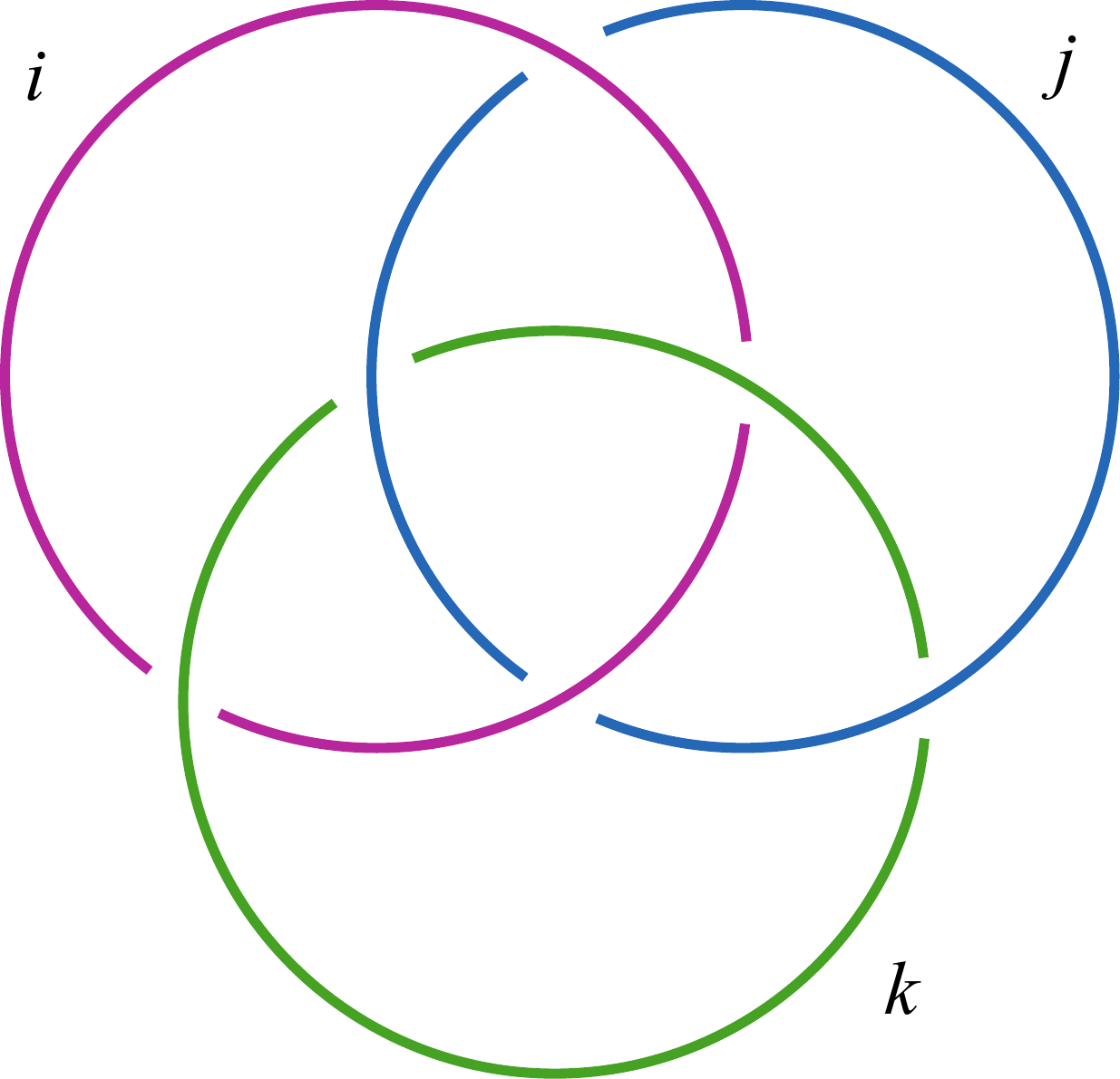}
\caption[]{{\small{
Surgery on the Borromean rings with framings $(j,k,l)$ on the three components. When $j,k,l$ are either 0 or various powers of 2, these surgeries yield non-vanishing examples of the congruence in Theorem \ref{thm:main}, in which $v_Y(E)\equiv 1$ (mod 2) and $k(Y)=1,2,3$.
}}}\label{fig:borromean}
\end{figure}

\noindent To make use of Theorem \ref{thm:main}, one can replace the 4-framings in the above three examples by $0$-framings, to get manifolds with the same $\Z_2$-cohomology rings but $b_1(Y)>0$, ensuring that they support non-trivial admissible bundles.

In what follows, we describe how to deduce Theorem \ref{thm:2} using Theorem \ref{thm:main} and related results of Poudel \cite{poudel} and Turaev \cite{turaev}. By Poudel \cite{poudel}, the Casson invariant $\lambda(Y,E)$ may be identified with Lescop's invariant of \cite{lescop}, slightly modified. The proof utilizes Floer's exact triangle for instanton homology and Dehn surgery techniques \`{a} la Lescop \cite{lescop}. As a result, the parity of $v_Y(E)$ is independent of $E$, the choice of non-trivial admissible bundle. After some substitutions, the congruences resulting from Theorem \ref{thm:main} and \cite{poudel} may be summarized as follows.\\

\begin{corollary}
Suppose $x\in H^2(Y;\Z_2)$ has no torsion lifts to $H^2(Y;\Z)$. Then modulo 2 we have\label{cor:main}
\begin{equation}
v_Y(x) \equiv 
\begin{cases}
	2^{2-b_1(2)}\Delta_Y''(1), & b_1(Y)=1\\
	2^{3-b_1(2)}\#(\gamma\cap F), & b_1(Y) = 2\\
	2^{3-b_1(2)}N\cdot(a\cup b\cup c)[Y],\;\;\; & b_1(Y) =3\\
	0, & b_1(Y)\geq 4
\end{cases}\label{eq:congs}
\end{equation}
where $N$ is the cardinality of ${\emph{\text{Tor}}}H_1(Y;\Z)$ and other terms are defined below. In particular, if $b_1(Y) = 3$ and $H_1(Y;\Z)$ has an element of order 4, then $v_Y(x)\equiv 0 \mod 2$.\\
\end{corollary}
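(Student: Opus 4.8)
The plan is to deduce Corollary~\ref{cor:main} by feeding Theorem~\ref{thm:main} into Poudel's identification of $\lambda(Y,E)$ with a normalization of Lescop's invariant, and then invoking the classical evaluations of the latter. First I would use the hypothesis that $x$ has no torsion lift to $H^2(Y;\Z)$ to realize $x = w_2(E)$ for some non-trivial admissible $U(2)$ bundle $E$, so that $v_Y(x) = v_Y(E)$; in particular $b_1(Y) \geq 1$ and $b_1(2) \geq b_1(Y)$. Theorem~\ref{thm:main} then reduces everything to the $2$-adic behaviour of $\lambda(Y,E)$: when $b_1(2) \geq 2$ it gives that $\lambda(Y,E)$ is divisible by $2^{b_1(2)-3}$ and $v_Y(x) \equiv 2^{3-b_1(2)}\lambda(Y,E) \pmod 2$, while when $b_1(2) = 1$ (which forces $b_1(Y) = 1$ with $H_1(Y;\Z)$ free of $2$-torsion) it gives $v_Y(E) = 0$, consistent with the claimed value $2\Delta_Y''(1) \equiv 0$.

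Next I would apply Poudel~\cite{poudel} --- which rests on Floer's surgery exact triangle and Turaev's torsion computations~\cite{turaev} --- to identify $\lambda(Y,E)$, up to sign and a prescribed normalization, with the slightly modified Lescop invariant of $Y$. Since that quantity depends only on $Y$, the parity of $v_Y(E)$ is independent of the admissible bundle $E$, which is also the first assertion of Theorem~\ref{thm:2}. I would then substitute the surgery-formula evaluations of Lescop's invariant from~\cite{lescop}: the Alexander-polynomial term $\Delta_Y''(1)$ when $b_1(Y) = 1$ (where, under Poudel's normalization, $\lambda(Y,E)$ is one half of $\Delta_Y''(1)$ up to sign, producing the exponent $2 - b_1(2)$ in place of $3 - b_1(2)$), the curve--surface intersection number $\#(\gamma \cap F)$ when $b_1(Y) = 2$, the product $N \cdot (a \cup b \cup c)[Y]$ of the torsion order with the triple cup product when $b_1(Y) = 3$, and the vanishing of the invariant when $b_1(Y) \geq 4$. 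Tracking the powers $2^{3 - b_1(2)}$ through these substitutions yields the four cases of~(\ref{eq:congs}); when $b_1(Y) = 3$ and $H_1(Y;\Z)$ is torsion-free this reproduces the Ruberman--Saveliev computation~\cite{rs1} for the homology $3$-torus.

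For the final clause, suppose $b_1(Y) = 3$ and $H_1(Y;\Z)$ contains an element of order $4$. Writing the $2$-primary part of $\text{Tor}\,H_1(Y;\Z)$ as $\Z_{2^{k_1}} \oplus \cdots \oplus \Z_{2^{k_t}}$ with all $k_i \geq 1$ and $k_j \geq 2$ for some $j$, one has $b_1(2) = 3 + t$ and $2^{k_1 + \cdots + k_t} \mid N$ with $k_1 + \cdots + k_t \geq t + 1$; hence $2^{3 - b_1(2)}N = 2^{-t}N$ is even, so $2^{3 - b_1(2)}N \cdot (a \cup b \cup c)[Y] \equiv 0$ and therefore $v_Y(x) \equiv 0 \pmod 2$. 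I expect the main obstacle to be bookkeeping rather than geometry: one must reconcile Floer's normalization of $\lambda(Y,E)$ with Poudel's and Lescop's conventions --- signs and factors of $2$, including the precise content of the ``slight modification'' --- and then control $2$-adic valuations under the surgery-formula substitutions, especially when $H_1(Y;\Z)$ has $2$-torsion so that $b_1(2) > b_1(Y)$.
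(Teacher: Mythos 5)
Your proposal follows essentially the same route as the paper, which likewise obtains the corollary by combining the congruence of Theorem~\ref{thm:main} with Poudel's identification of $\lambda(Y,E)$ with (a modification of) Lescop's invariant and then substituting Lescop's surgery-formula evaluations in each range of $b_1(Y)$. Your $2$-adic valuation argument for the final clause (that $2^{3-b_1(2)}N$ is even once $H_1(Y;\Z)$ has an element of order $4$) correctly supplies a detail the paper leaves implicit, so the proposal is sound.
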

\vspace{.20cm}

\noindent The right hand sides are defined as follows. First, for $b_1(Y)=1$, $\Delta_Y(t)$ is the Alexander polynomial of $Y$, normalized so that $\Delta_Y(1)=1$. If $Y$ is $0$-surgery on a knot $K$ in an integral homology 3-sphere $\Sigma$, then $\Delta_Y(t)$ is just the Alexander polynomial $\Delta_{K\subset \Sigma}(t)$. Next, suppose $b_1(Y)=2$. Take two oriented surfaces in $Y$ that generate $H_2(Y;\Q)$. Let $\gamma$ be their intersection, and $\gamma'$ the curve parallel to $\gamma$ that induces the trivialization of the tubular neighborhood of $\gamma$ given by the surfaces. Then $N\cdot \gamma'$ has a Seifert surface $F$ in $Y$, and $\#(\gamma\cap F)$ is the count of intersection points, in general position. Finally, in the $b_1(Y)=3$ case,  the triple $a,b,c$ generates $H^1(Y;\Z)$ up to torsion, and $[Y]$ is the fundamental class of $Y$.

The vanishing implications of the corollary above look rather similar to that of Theorem \ref{thm:2}, except that the role of $k(Y)$ is weakened to that of $b_1(Y)$. In other words, the role of counting summands of the form $\Z$ and $\Z_{2^k}$ for $k>1$ is replaced by that of just counting $\Z$ summands. From the perspective of the $\Z_2$-cohomology ring, these kinds of summands are all the same. With this thought in mind, it is a rather straightforward task to establish Theorem \ref{thm:2} from Corollary \ref{cor:main} using realization results for the $\Z_2$-cohomology structure of 3-manifolds due to Turaev. See Section \ref{sec:thm2}. We remark that, a posteriori, the divisibility properties of the quantities listed in the above corollary should imply Theorem \ref{thm:2}. However, the authors prefer to mostly argue with the $\Z_2$-cohomology ring structure, in line with the definition of $v_Y(x)$.\\

\vspace{.25cm}
\noindent {\bf{Some more examples.}} For any abelian group $H$ containing an element of order 4 or $\infty$, there is a 3-manifold $Y$ with $H_1(Y;\Z)$ isomorphic to $H$ and $v_Y(x)=0$, in which $x$ is any non-cup-square. For this, just consider integer-framed surgeries on unlinks. Note also that the integer $v_Y(x)$ is stable under connect sums with $\R\mathbb{P}^3$, which increases $b_1(2)$ by 1 while fixing $k(Y)$. This operation, applied to the three Borromean surgeries examples above, gives examples where $v_Y(x)\equiv 1\mod 2$ for any pair $b_1(2)$, $k(Y)$ such that $b_1(2)\geq 3$ and $k(Y)\in \{1,2,3\}$. In fact, it is straightforward to produce non-vanishing examples with $H_1(Y;\Z)$ any isomorphism class of abelian group with those same two constraints. We also have examples from Seifert-fibered spaces, with orientable base orbifold:\\

\begin{prop}
	Let $Y$ be a Seifert-fibered space with Seifert invariants $(g,b,(\alpha_1,\beta_1),\ldots,(\alpha_r,\beta_r))$, where $g$ is the genus of the base orbifold. Suppose $x\in H^2(Y;\Z_2)$ is not a square. Then $v_Y(x) \equiv 1$ {\emph{(mod 2)}} if and only if $g=1$, all $\alpha_i$ are odd, and $b + \sum \beta_i  \equiv 0$ {\emph{(mod 2)}}.\\ \label{prop:main}
\end{prop}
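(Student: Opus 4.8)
The plan is to reduce Proposition~\ref{prop:main} to a computation in the $\Z_2$-cohomology ring of the Seifert-fibered space $Y$, using Theorem~\ref{thm:2} as the organizing principle: by Theorem~\ref{thm:2}, the parity of $v_Y(x)$ for a non-square $x$ depends only on the $\Z_2$-cohomology ring of $Y$ (and is zero once $k(Y)\geq 4$), so the entire content is to determine, from the Seifert invariants, (i) whether $k(Y)\leq 3$, and (ii) in that range, the parity of $v_Y(x)$ directly. First I would recall the standard presentation of $H_1(Y;\Z)$ for such a Seifert-fibered space, generated by the base-surface classes $a_1,b_1,\ldots,a_g,b_g$, the exceptional fiber classes $q_1,\ldots,q_r$ and the regular fiber class $h$, with relations $\alpha_i q_i + \beta_i h = 0$ and $\sum q_i - b\,h + \sum [a_j,b_j]\text{-type terms} = 0$ (written additively). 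Reducing mod $2$, the classes $q_i$ with $\alpha_i$ even contribute extra $\Z_2$ or $\Z_4$ summands; one sees that $b_1(2)$ and $k(Y)$ are controlled by the parity of the $\alpha_i$, by $g$, and by $b+\sum\beta_i \bmod 2$.

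Next I would separate cases by $g$. If $g=0$, the only $\Z$ or $\Z_{2^k}$ ($k>1$) summands of $H_1(Y;\Z)$ come from the relation matrix on the $q_i$ and $h$; one checks that $b_1(Y)\leq 1$ and in fact $k(Y)$ is at most the number of even $\alpha_i$ plus a correction, but crucially $Y$ is then a rational homology sphere or has $b_1=1$ with a cohomology ring in which the relevant triple structure degenerates, forcing $v_Y(x)$ even — this matches the ``only if $g=1$'' clause. If $g\geq 2$, then $H_1(Y;\Z)$ has at least $2g\geq 4$ free $\Z$ summands coming from $a_j,b_j$, so $k(Y)\geq 4$ and Theorem~\ref{thm:2} gives $v_Y(x)\equiv 0$. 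So the interesting case is $g=1$: here $a_1,b_1$ give two $\Z$ summands, and the fiber class $h$ gives a third $\Z$ summand precisely when $h$ has infinite order, which happens iff the Euler-number-type quantity $b+\sum\beta_i/\alpha_i$ vanishes — but for the $\Z_2$-ring what matters is whether $h$ is $2$-divisible-free, i.e.\ whether $h$ survives as a generator of a $\Z$ or $\Z_{2^k}$, $k>1$ summand, which is governed by the parities of $\alpha_i$ and of $b+\sum\beta_i$. When all $\alpha_i$ are odd and $b+\sum\beta_i\equiv 0\pmod 2$, I would show $k(Y)=3$ with $H^1(Y;\Z_2)$ having a basis $a,b,c$ (images of $a_1,b_1,h$) whose $\Z_2$-cohomology ring agrees with that of the $3$-torus in the relevant triple-product degree, so that the $b_1(Y)=3$ line of Corollary~\ref{cor:main} — or directly the homology-$3$-torus computation of Ruberman--Saveliev — gives $v_Y(x)\equiv 1$. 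Conversely, if some $\alpha_i$ is even or $b+\sum\beta_i$ is odd, I would show either $k(Y)\geq 4$ (extra summands appear) or the triple cup product $a\cup b\cup c$ vanishes mod $2$, so $v_Y(x)$ is even.

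The main obstacle I expect is the bookkeeping in the $g=1$ case: pinning down exactly when the fiber class $h$ contributes to $k(Y)$ versus becoming $2$-divisible or odd-order torsion, and then computing the triple cup product $a\cup b\cup c\,[Y]$ mod $2$ from the Seifert data. Concretely, one must relate the mod-$2$ reduction of the presentation matrix to the linking form / triple product, and verify that the parity condition $b+\sum\beta_i\equiv 0$ is exactly the condition making $h$ (mod $2$) the reduction of an integral class of infinite or $2$-power order rather than of odd order — equivalently, that $\langle a\cup b\cup c,[Y]\rangle$ and the number $N$ of Corollary~\ref{cor:main} combine to give an odd product precisely under the stated hypotheses. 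I would handle this either by a direct Mayer--Vietoris / CW computation on the standard Seifert decomposition, or by invoking Turaev's classification of $\Z_2$-cohomology rings of $3$-manifolds (already cited in the discussion following Theorem~\ref{thm:2}) to identify the ring and read off $v_Y(x)$; the remaining Seifert-fibered cases with $g\leq 1$ then reduce to routine verifications that $v_Y(x)$ is even.
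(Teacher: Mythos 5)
Your overall strategy --- reduce everything to the $\Z_2$-cohomology ring determined by the Seifert data, kill the $g\geq 2$ cases by a large-$b_1$ vanishing result, and isolate $g=1$ with all $\alpha_i$ odd and $b+\sum\beta_i$ even as the one non-vanishing case --- is exactly the paper's, which quotes the ring from the reference \cite{adhsz} rather than re-deriving it from a presentation. But there are two concrete gaps in your plan. First, your vanishing mechanism does not cover all the cases it needs to. For $g=1$ with some $\alpha_i$ even (and likewise for $g=0$ with several even $\alpha_i$, where $b_1(2)$ can be large even though $b_1(Y)\leq 1$), one has $b_1(Y)\leq 2$ and $k(Y)$ need not be $\geq 4$, so neither Theorem \ref{thm:2} nor the ``triple cup product vanishes'' criterion (which is the $b_1(Y)=3$ line of Corollary \ref{cor:main}) applies; the $b_1(Y)\leq 2$ lines of the corollary involve $\Delta_Y''(1)$ and $\#(\gamma\cap F)$, which you have not computed. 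The paper closes this with a separate elementary observation: if every product $uv$ of classes in $H^1(Y;\Z_2)$ is a cup-square, then $v_Y(x)=0$ for every non-square $x$, because the value $uv+v(u+v)+u(u+v)=u^2+v^2+uv$ attached to a triple would itself be a square (squaring is linear mod 2). One then checks from the explicit ring of \cite{adhsz} that ``squares $=$ products'' holds whenever some $\alpha_i$ is even, or all $\alpha_i$ are odd and $b+\sum\beta_i$ is odd. Without this (or an equivalent enumeration in the explicit ring), ``routine verifications that $v_Y(x)$ is even'' is not justified.

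Second, in the crucial case your description of the ring is not quite right and your route to the count is fragile. With $g=1$, all $\alpha_i$ odd and $b+\sum\beta_i\equiv 0$, one gets a basis $a,b,c$ of $H^1(Y;\Z_2)$ with $a^2=b^2=0$ and $ab,bc,ac$ a basis of $H^2$, but $c^2$ may be either $0$ or $ab$ depending on the $\beta_i$; in the latter case $k(Y)=2$, not $3$, and the ring does not agree with that of the $3$-torus. Moreover $b_1(Y)$ can be $2$ here (nonzero Euler number), so you cannot apply the $b_1(Y)=3$ line of Corollary \ref{cor:main} to $Y$ itself without first passing through a Turaev realization. None of this is fatal --- since $v_Y(x)$ is by definition a ring invariant, the cleanest finish (and the paper's) is to take $x=ac$, which is never a square, and enumerate directly that $v_Y(ac)=1$ in both subcases $c^2=0$ and $c^2=ab$, then invoke Theorem \ref{thm:2} for independence of the choice of non-square $x$. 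You should replace the detour through Corollary \ref{cor:main} by this direct count, and supply the ``products are squares'' argument for the vanishing cases.
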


\noindent We note that such Seifert fibered spaces have $b_1(Y)\in \{2,3\}$ and $b_1(2)=3$. Included in this list is of course the 3-torus. This proposition is easily proven using the description of the mod 2 cohomology ring of a Seifert fibered space given in \cite{adhsz}. See Section \ref{sec:ex}. 

We mention that the Seifert-fibered spaces considered here for genus $g=0$ are double branched covers of Montesinos links. However, by the above proposition, the relevant invariant $v_Y(x)$ in these cases is always even. In Section \ref{sec:ex} we give an example of a double branched cover for which Theorem \ref{thm:main} has a non-vanishing congruence. \\

\vspace{.25cm}

\noindent {\textbf{Discussion.}} The integers $v_Y(x)$, and not just their parities, are interesting in the context of $SO(3)$ gauge theory. Indeed, as is evident in the sequel, the $V_4$-connection classes counted by $v_Y(E)$ are persistent (unmoved) under a large class of perturbations. As such, they form a distinguished set of generators in the instanton Floer chain complex for the pair $(Y,E)$, defined using any such perturbation. Klein-four connections also play a pivotal role in Kronheimer and Mrowka's $SO(3)$ instanton homology for webs and its relation to the Four-Color Theorem \cite{km-tait}.

The authors did not see how to provide a general algebraic proof of Theorem \ref{thm:2}, but we believe it can be done. Our main purpose in this article is to exhibit how the congruence in Theorem \ref{thm:main} requires hardly any work, once the picture for the relevant moduli spaces is established.

Finally, it should be mentioned that although we refer to the invariant $\lambda(Y,E)$ as a `Casson invariant,' we are using Taubes' interpretation \cite{taubes} of Casson's invariant for integral homology 3-spheres, applied to non-trivial admissible bundles. \\

\vspace{.25cm}
\noindent {\bf{Outline.}} In Section \ref{sec:admissible} we review the notion of non-trivial admissibility and the suitable generalization which motivates the hypotheses of Theorem \ref{thm:2}. Sections \ref{sec:setup} and \ref{sec:klein4} provide the background for the main argument of Theorem \ref{thm:main}, which was sketched above and is presented concisely in Section \ref{sec:mainarg}. The issue of perturbations is ignored here, and then taken up in Section \ref{sec:hol}. In Section \ref{sec:thm2} we complete the proof of Theorem \ref{thm:2}. Finally, in Section \ref{sec:ex} we prove Proposition \ref{prop:main}, record a connected sum formula for the parity of $v_Y(x)$, and discuss double branched covers.\\

\vspace{.25cm}
\noindent {\bf{Acknowledgments.}} The authors would like to thank Danny Ruberman and Nikolai Saveliev for helpful discussions. The first author was supported by NSF grant DMS-1503100.\\

\vspace{.25cm}

\section{Non-trivial admissible bundles}\label{sec:admissible}

Here we briefly discuss Floer's non-trivial admissibility condition. A good reference for this material is \cite{braam-donaldson}. As in the introduction, we let $Y$ be a closed, oriented and connected 3-manifold. An $SO(3)$ bundle over $Y$ is {\emph{non-trivial admissible}} if its second Stiefel-Whitney class $x \in H^2(Y;\Z_2)$ satisfies the following three equivalent conditions, cf. \cite[Lemma 1.1]{braam-donaldson}:\\

\begin{itemize}
	\item The image of $x$ under $h:H^2(Y;\Z_2)\to \text{Hom}(H_2(Y;\Z),\Z_2)$ is non-zero.
	\item There is an {\emph{orientable}} surface $\Sigma \subset Y$ such that $\langle x, [\Sigma] \rangle \not\equiv 0$.
	\item The element $x\in  H^2(Y;\Z_2)$ has no torsion lifts to $H^2(Y;\Z)$.\\
\end{itemize}

\noindent One then defines a $U(2)$ bundle to be non-trivial admissible if its induced adjoint $SO(3)$ bundle is non-trivial admissible. The definition is motivated by the fact that a non-trivial admissible $U(2)$ bundle admits no reducible flat connections. This avoids complications in instanton Floer theory. Using that $h$ is surjective, and the fact that $SO(3)$ bundles over a 3-manifold are characterized by the second Stiefel-Whitney class, we count the number of non-trivial admissible $SO(3)$ bundles:
\[
	(2^{b_1(Y)}-1)2^{b_1(2)-b_1(Y)}.
\]
According to Theorem \ref{thm:main} and Poudel's result mentioned in the introduction, the parity of $v_Y(E)$ is the same for all non-trivial admissible bundles $E$. However, Theorem \ref{thm:2} indicates that the parity of $v_Y(E)$ is invariant under a larger collection of bundles. Such bundles are characterized by having a second Stiefel-Whitney class $x\in H^2(Y;\Z_2)$ that satisfies the following equivalent conditions:\\

\begin{itemize}
	\item The image of $x$ under $g:H^2(Y;\Z_2)\to \text{Hom}(\text{PD}(\text{ker}(\beta^1)),\Z_2)$ is non-zero.
	\item There is a surface $\Sigma \subset Y$ such that $\langle x, [\Sigma] \rangle \not\equiv 0$ and $\Sigma\cdot \Sigma \equiv 0 \in H_1(Y;\Z_2)$.
	\item The element $x\in  H^2(Y;\Z_2)$ has no order 2 lifts to $H^2(Y;\Z)$.
	\item The element $x\in  H^2(Y;\Z_2)$ is not the cup-square of an element from $H^1(Y;\Z_2)$.\\
\end{itemize}

\noindent Note that here $\Sigma$ is not necessarily orientable. Also, $\text{PD}:H^1(Y;\Z_2)\to H_2(Y;\Z_2)$ is the Poincar\'{e} duality isomorphism. These conditions are the natural extensions of the prior three conditions when one wants to treat $\Z$-summands and $\Z_{2^k}$-summands for $k>1$ the same. We note that the ring $H^\ast(Y;\Z_2)$ cannot see the difference between such summands. The map $g$ is surjective, so the number of $SO(3)$ bundles of this more general type is
\[
	(2^{k(Y)}-1)2^{b_1(2)-k(Y)}.
\]
The most basic example of such a bundle that is not non-trivial admissible is the non-trivial $SO(3)$ bundle over the lens space $L(4,1)$.\\
\vspace{.25cm}

\section{Configuration spaces and stabilizers}\label{sec:setup}

Fix a connection $A_0$ on $\det(E)$, and let $\CE$ be the space of connections $A$ on $E$ with determinant connection $\tr(A)=A_0$. Let $\GE$ be the gauge transformation group consisting of smooth unitary automorphisms of $E$ that are determinant 1. The configuration space is the quotient $\BE = \CE/\GE$. The non-trivial admissibility of $E$ implies that all points in $\BE$ are irreducible, meaning that the $\GE$-stabilizer of every connection in $\CE$ is as small as possible:
\[
    \text{Stab}_{\GE}(A) =\{\pm 1\}.
\]
The $U(2)$ bundle $E$ induces an $SO(3)$ bundle $\adE$, which may be defined as the sub-bundle of $\text{End}(E)$ consisting of trace-free, skew-hermitian endomorphisms. We let $\GadE$ denote the full $SO(3)$ gauge transformation group of $\adE$. 
Any $A\in \CE$ induces a connection $A_\ad\in\CadE$, and this induces a bijection between $\CE$ and $\CadE$. Indeed, any $U(2)$ connection $A$ on $E$ is uniquely determined by $\tr(A)$ on $\det(E)$ and $A_{\ad}$ on $\adE$. In contrast to the $U(2)$ case, we have
\[
    \text{Stab}_{\GadE}(A_\ad) \in \Big\{\{1\},\; \Z_2,\; V_4\Big\}.
\]
Indeed, the difference between the determinant 1 unitary gauge group and the $SO(3)$ gauge group is described by an action of $H^1(Y;\Z_2)$ on $\BE$ that gives $\BadE$ as its quotient space.
%\begin{equation*}
%\begin{CD}
%H^1(Y;\mathbb{Z}_2): \;\;@.  \BE  \lcirclearrowleft @>>> \BadE.
%\end{CD}
%\end{equation*}
The action is as follows: $H^1(Y;\Z_2)$ parametrizes the isomorphism classes of flat complex line bundles (with connection) $\chi$ with holonomy $\{\pm 1\}$. Then $[\chi]$ acts on $[A]\in\BE$ by tensoring the bundle-with-connection $(E,A)$ with $\chi$. We then have the more precise statement that $\text{Stab}_{\GadE}(A_\ad)$ is naturally a subspace of $H^1(Y;\Z_2)$, with the constraint that
\[
     \dim_{\Z_2}\text{Stab}_{\GadE}(A_\ad) \in \{0,1,2\}.
\]
In summary, we see that even though any connection in $\BE$ is irreducible, its image in $\BadE$ may not be irreducible. Connections on $\su(E)$ with stabilizer $\Z_2$ are exactly those connections whose holonomy is contained in $O(2)$ and properly contains a Klein-4 group. Equivalently, these are connections that are compatible with a splitting
\[
    \su(E) = \lambda\oplus L
\]
where $\lambda$ is a non-trivial real line bundle and $L$ is an unoriented real 2-plane bundle, and for which the connection on $L$ is irreducible. Connections with stabilizer $V_4$ are those whose holonomy is also isomorphic to $V_4$. Equivalently, these are connections compatible with a splitting
\[
    \su(E) = \lambda_1\oplus \lambda_2\oplus \lambda_3
\]
into a sum of three non-trivial real line bundles. We write $\BadE$ as a disjoint union
\[
    \BadE = \BadE^{\ast} \cup \BadE^{\Z_2} \cup \BadE^{V_4}
\]
in terms of irreducible connections, connections with $\Z_2$-stabilizer, and those with $V_4$-stabilizer. We refer to this last set as {\emph{Klein-four connections}}.\\

\begin{remark}
    If the assumption of non-trivial admissibility is removed, three other kinds of stabilizers in the $SO(3)$-gauge group can occur: $SO(2)$, $O(2)$ and $SO(3)$.\\
\end{remark}

\section{Klein-four connections}\label{sec:klein4}

The subset of Klein-four connections, unlike the other two subsets of $\BadE$, is a finite, discrete set. As the elements are characterized by having holonomy $V_4$, a finite group, they must all be flat, as a simple continuity argument shows. Alternatively, each splitting $\su(E) = \lambda_1\oplus \lambda_2\oplus \lambda_3$ into non-trivial real line bundles supports a unique compatible connection, which of course must be flat. For a moment, let us consider the larger set
\[
    \mathcal{B}^{\geq V_4} = \left\{\begin{array}{c}\text{connections over $Y$ on any $SO(3)$} \\\text{bundle with holonomy inside a $V_4$}\end{array}\right\}\bigg/ \text{gauge}.
\]
Then $\mathcal{B}^{\geq V_4}$ is parametrized by $SO(3)$ bundles of the form $\lambda_1\oplus \lambda_2\oplus \lambda_3$ over $Y$. Noting that $w_1(\su(E))=0$, sending such a bundle to the triple $\{w_1(\lambda_1),w_1(\lambda_2),w_1(\lambda_3)\}$ sets up a bijection
\[
    \mathcal{B}^{\geq V_4} \;\; \overset{1:1}\longleftrightarrow \;\;  \left\{\begin{array}{c} \{a,b,c\} \subset H^1(Y;\Z_2)\\ \text{with  }\; a + b + c =0\end{array}\right\} =: V_Y.
\]
Yet another description of $\mathcal{B}^{\geq V_4}$ is as the set of homomorphisms $\text{Hom}(\pi_1(Y),V_4)$ modulo the action of $S_3=\text{Aut}(V_4)$. A simple counting argument shows that $\mathcal{B}^{\geq V_4}$ has cardinality
\[
    2^{b_1(2)-1} + \frac{1}{6}\left(4^{b_1(2)}+2\right).
\]
Now, the elements of $\mathcal{B}^{\geq V_4}$ that live on $\su(E)$ are the ones with
\[
    w_2(E) = w_2(\lambda_1\oplus \lambda_2\oplus \lambda_3) = a_1a_2 + a_2 a_3 + a_1a_3, \qquad a_i = w_1(\lambda_i).
\]
Thus we have the following bijection describing Klein-four connections on the bundle $\su(E)$:
\[
    \BadE^{V_4} \;\; \overset{1:1}\longleftrightarrow \;\;  \left\{\begin{array}{c} \{a,b,c\} \in V_Y \text{ with} \\ \; ab + bc + ac = w_2(E)\end{array}\right\}.
\]
We see now that $v_Y(E)=\left|\BadE^{V_4}\right|$, and the statement of the main theorem is the congruence
\begin{equation}
	\lambda(Y,E) \equiv 2^{b_1(2)-3}\cdot\left|\BadE^{V_4}\right| \mod 2^{b_1(2)-2}.\label{eq:cong2}
\end{equation}
\vspace{.20cm}

\section{The argument modulo perturbations}\label{sec:mainarg}

We now prove the theorem under the assumption that all moduli spaces to follow are nondegenerate, so that no perturbations are needed. The argument uses the most basic information we have from the $H^1(Y;\Z_2)$-action. Consider the moduli space of projectively flat connections on $E$:
\[
 \Mpflat_E :=  \left\{ [A]\in \BE: \; F_A = \frac{1}{2}F_{A_0}\cdot\text{id}_{E}\right\}.
\]
This is a finite set, and each of its points is irreducible. This moduli space is invariant under the $H^1(Y;\Z_2)$-action, and its quotient is the moduli space of flat connections on $\su(E)$:

\[
	\M_{\adE} := \left\{[B]\in \BadE: \; F_{B} =0 \right\}.
\]
\vspace{.02cm}

\noindent We need the following observation. An element $w\in H^1(Y;\Z_2)$ affects the relative mod 8 Floer grading $\text{gr}[A]$ of a connection $[A]\in \M_E$ by the formula (see \cite[Prop. 1.9]{braam-donaldson})
\[
    \text{gr}\left(w\cdot [A]\right) -\text{gr}[A]\;\equiv \;4\left( w_2(E)w + w^3 \right) \mod 8,
\]
so the $H^1(Y;\Z_2)$-action preserves the $\Z_2$-gradings. In particular, each $H^1(Y;\Z_2)$-orbit lies in a single $\Z_2$-grading. The proof is now completed by counting orbit sizes. Each connection in $\M_{\adE}$ with stabilizer at most $\Z_2$ gives an orbit of size either 
\[
    |H^1(Y;\Z_2)|=2^{b_1(2)} \quad \text{ or }\quad  |H^1(Y;\Z_2)/\Z_2|=2^{b_1(2)-1}
\]
lying upstairs in $\Mpflat_E$. Thus $2^{b_1(2)-1}$ divides the {\emph{signed}} count of $\Mpflat_E$, with the prior observation about gradings in mind. The remaining connections downstairs in $\M_{\su(E)}$ are Klein-four connections, and so in fact are given by the set $\mathcal{B}_{\su(E)}^{V_4}$. Each point in this set contributes an orbit of size 
\[
    |H^1(Y;\Z_2)/V_4|=2^{b_1(2)-2}
\]
upstairs in $\Mpflat_E$. Recalling that $\lambda(Y,E)$ is {\emph{half}} the signed count of points in $\M_E$, we recover the congruence (\ref{eq:cong2}), proving the theorem under the assumption of non-degeneracy.\\

\vspace{.30cm}

\section{Including holonomy perturbations}\label{sec:hol}

In general, the moduli space $\M_E$ is degenerate and we need to perturb the projectively flat equation to achieve the transversality we want. Henceforth we assume that our 3-manifold $Y$ is equipped with a Riemannian metric. The standard class of perturbations used are known as {\emph{holonomy perturbations}} \cite{herald, rs1}. The input for such a perturbation is an embedding $\Gamma=\{\gamma_k\}_{k=1}^m$ into $Y$ of solid tori $\gamma_k:S^1\times D^2\to Y$. We require that the embedded tori $\gamma_k$ have a common normal disk, meaning that the image of $\{1\}\times D^2$ under $\gamma_k$ is the same for all $k$. We also require that the images of the core loops $S^1\times\{0\}$ are disjoint away from the normal disk. Fix a trivialization of $\det(E)$ over the image of $\Gamma$, which is homotopically a wedge (bouquet) of circles. This allows us to consider the holonomy around the $\gamma_k$ as living in $SU(2)$. Let $f:SU(2)^m\to \R$ be a conjugation invariant function, i.e.,
\[
    f(g a_1 g^{-1}, \ldots , ga_mg^{-1}) = f(a_1,\ldots,a_m)
\]
for all $g\in SU(2)$. We also choose a smooth 2-form $\mu$ on $D^2$ with compact support in the interior and integral 1. From this data one constructs a holonomy perturbation $h$, given as follows:
\[
    h(A) = \int_{D^2} f\left(\text{Hol}_{\gamma_{1,z}}(A),\ldots,\text{Hol}_{\gamma_{m,z}}(A)\right)\mu(z).
\]
Here $\gamma_{k,z}$ is the loop $t\mapsto \gamma_k(t,z)$ in $Y$. Fixing only the data $\Gamma$, we define $\mathcal{H}_\Gamma$ to be the space of perturbations constructed as above. Each $h\in \mathcal{H}_\Gamma$ yields a well-defined function $h:\BE\to \R$.

One way to guarantee that the perturbation $h$ is $H^1(Y;\Z_2)$-equivariant is to require that each loop $\text{im}(\gamma_k)$ is zero as a class in $H_1(Y;\Z_2)$. We call such $\Gamma$ {\emph{mod-2 trivial}}, following \cite{rs1}, where this condition is introduced. We record their observation:\\

\begin{lemma}
    If $\Gamma$ is mod-2 tivial, then each $h\in \mathcal{H}_\Gamma$ is $H^1(Y;\Z_2)$-equivariant.\\
\end{lemma}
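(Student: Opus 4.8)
The plan is to unwind the definitions and show that the $H^1(Y;\Z_2)$-action commutes with the perturbation term $h$ under the mod-2 triviality hypothesis. Recall that $w\in H^1(Y;\Z_2)$ acts on $[A]\in\BE$ by tensoring $(E,A)$ with the flat line bundle-with-connection $\chi_w$ having holonomy $\{\pm 1\}$ determined by $w$. Since $h(A)$ is built from the holonomies $\text{Hol}_{\gamma_{k,z}}(A)$ around the perturbation loops $\gamma_{k,z}$, the first step is to understand how these holonomies transform under $A\mapsto A\otimes\chi_w$. Tensoring by a flat line bundle multiplies the holonomy around any loop $\ell$ by the scalar $\hol_\ell(\chi_w)\in\{\pm 1\}$, which equals $(-1)^{\langle w,[\ell]\rangle}$ where $[\ell]\in H_1(Y;\Z_2)$. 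Thus $\text{Hol}_{\gamma_{k,z}}(A\otimes\chi_w) = \epsilon_k\cdot \text{Hol}_{\gamma_{k,z}}(A)$, where $\epsilon_k = (-1)^{\langle w,[\text{im}(\gamma_k)]\rangle}$, using that all the loops $\gamma_{k,z}$ for varying $z$ are homologous (they cobound the embedded annulus swept out in the solid torus, or more simply are freely homotopic).

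The second step uses the hypothesis: $\Gamma$ is mod-2 trivial means exactly that $[\text{im}(\gamma_k)] = 0\in H_1(Y;\Z_2)$ for every $k$, so every $\epsilon_k = 1$. Hence $\text{Hol}_{\gamma_{k,z}}(A\otimes\chi_w) = \text{Hol}_{\gamma_{k,z}}(A)$ for all $k$ and all $z\in D^2$. Here I should be slightly careful about the chosen trivialization of $\det(E)$ over $\text{im}(\Gamma)$ used to view the holonomies in $SU(2)$: tensoring with $\chi_w$ together with this trivialization produces a well-defined $SU(2)$-valued holonomy, and the sign ambiguity inherent in lifting $U(2)$ holonomy to $SU(2)$ is precisely what is killed by $\epsilon_k=1$. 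One then plugs this into the integral formula
\[
    h(A\otimes\chi_w) = \int_{D^2} f\left(\text{Hol}_{\gamma_{1,z}}(A\otimes\chi_w),\ldots,\text{Hol}_{\gamma_{m,z}}(A\otimes\chi_w)\right)\mu(z) = h(A),
\]
so that $h$ descends to an $H^1(Y;\Z_2)$-invariant function on $\BE$, which is the claimed equivariance (the target $\R$ carries the trivial action). Since the unperturbed Chern--Simons-type functional is already $H^1(Y;\Z_2)$-invariant, the perturbed functional is too, and consequently the perturbed moduli space is $H^1(Y;\Z_2)$-invariant.

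The main obstacle is not any deep argument but rather the bookkeeping around the conjugation-invariance of $f$ and the $SU(2)$-lift: one must check that the scalar $\pm 1$ ambiguities interact correctly with the common-normal-disk condition (all $\gamma_{k,z}$ share the normal disk, so a consistent trivialization exists), and that conjugation-invariance of $f$ is what guarantees $h$ is well-defined on gauge-equivalence classes in the first place, independent of the equivariance question. Once the holonomy transformation rule $\text{Hol}(A\otimes\chi_w)=(-1)^{\langle w,[\ell]\rangle}\text{Hol}(A)$ is pinned down, the mod-2 trivial hypothesis makes the signs disappear and the equivariance is immediate. I would present the holonomy transformation rule as the single substantive lemma-within-the-proof and treat the rest as a direct substitution.
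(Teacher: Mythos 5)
Your proof is correct and is exactly the standard argument: the paper itself gives no proof, simply citing the observation from Ruberman--Saveliev, and their reasoning is precisely what you wrote --- tensoring by $\chi_w$ multiplies each $SU(2)$ holonomy $\text{Hol}_{\gamma_{k,z}}(A)$ by the central scalar $(-1)^{\langle w,[\text{im}(\gamma_k)]\rangle}$, which is $+1$ under the mod-2 triviality hypothesis, so $h$ is unchanged. Your attention to the $SU(2)$-lift via the fixed trivialization of $\det(E)$ and to the conjugation-invariance of $f$ is exactly the right bookkeeping.
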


\noindent Now, the perturbed $U(2)$ moduli space $\M_E^h$ is the set of critical points of the perturbed Chern-Simons functional $\text{CS}+h$. Specifically, for a suitable normalization of $\text{CS}$, we obtain
\[
	\M_E^h = \left\{[A]\in \mathcal{B}_E: \;\; F_{A} - \frac{1}{2}F_{A_0}\cdot \text{id}_E+ \star\nabla h = 0 \right\}.
\]
If $\Gamma$ is mod-2 trivial, this perturbed moduli space inherits the $H^1(Y;\Z_2)$-action from $\mathcal{B}_E$, and its quotient space is the perturbed $SO(3)$ moduli space for $\su(E)$. We also record the following:\\

\begin{lemma}
Suppose $\Gamma$ is mod-2 trivial. For any $h\in \mathcal{H}_\Gamma$, Klein-four connections are unmoved in the $SO(3)$ moduli space. More precisely, we always have the relation
\[
    \M_{\su(E)}^h \cap \mathcal{B}_{\su(E)}^{V_4} = \mathcal{B}_{\su(E)}^{V_4}.
\]\label{lemma:unmoved}
\end{lemma}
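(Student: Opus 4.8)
The plan is to show that a Klein-four connection $[B]\in \mathcal{B}_{\su(E)}^{V_4}$ is a critical point of the perturbed functional by showing the perturbation term $\star\nabla h$ vanishes identically at such a $B$, so that the perturbed equation reduces to the unperturbed flatness equation $F_B=0$, which $B$ already satisfies. Since the perturbation is built from the holonomies $\text{Hol}_{\gamma_{k,z}}(A)$ around the loops $\gamma_{k,z}$, the key observation is that $\Gamma$ being mod-2 trivial forces each such loop to be null-homologous mod $2$, hence to have trivial holonomy with respect to the flat $V_4$-connection $B$ (holonomy of a flat $V_4$-connection factors through $H_1(Y;\Z_2)$, since $V_4$ is abelian of exponent 2). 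So at $[B]$ the holonomy inputs are all the identity, independent of the normal disk parameter $z$.

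First I would recall that the gradient $\nabla h$ of a holonomy perturbation is supported in the image of $\Gamma$ and, at a connection $A$, is given by an explicit formula of the shape $\star\nabla h(A) = \sum_k$ (a bump 2-form supported near the core $\gamma_k$) tensored with a section of $\su(E)$ obtained by applying the differential of $f$ at the holonomy tuple, conjugated back along the holonomy (see Herald \cite{herald} or Ruberman--Saveliev \cite{rs1}). Second, I would observe that along a Klein-four connection $B$, restricted to the image of $\Gamma$, the connection is gauge-equivalent to the product connection (because the loops are mod-2 trivial and $B$ has $V_4$ holonomy, every loop in $\text{im}(\Gamma)$ has trivial $B$-holonomy, and $\text{im}(\Gamma)$ deformation retracts onto a wedge of circles, so $B|_{\text{im}(\Gamma)}$ is flat with trivial holonomy, hence trivial). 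Third, I would evaluate the gradient formula at such a connection: the holonomy tuple is $(1,\ldots,1)$ for every $z$, so the relevant term in $\star\nabla h(B)$ is $df_{(1,\ldots,1)}$ pushed into $\su(E)$. Because $f$ is conjugation invariant, its differential at the central point $(1,\ldots,1)\in SU(2)^m$ is invariant under the adjoint action of $SU(2)$ on each slot; but the only $\text{Ad}$-invariant element of $\su(2)^\ast\cong\su(2)$ is $0$, so $df_{(1,\ldots,1)}=0$ and hence $\star\nabla h(B)=0$.

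Combining these, the perturbed $SO(3)$ equation at $[B]$ becomes $F_B + \star\nabla h(B) = F_B = 0$, which holds since $B$ is flat; therefore $\mathcal{B}_{\su(E)}^{V_4}\subseteq \M_{\su(E)}^h$, and since intersecting with $\mathcal{B}_{\su(E)}^{V_4}$ on both sides gives $\M_{\su(E)}^h\cap \mathcal{B}_{\su(E)}^{V_4} = \mathcal{B}_{\su(E)}^{V_4}$, as claimed. The main obstacle I anticipate is pinning down precisely enough the formula for $\star\nabla h$ near the core loops to justify the vanishing claim — in particular making rigorous the step that the holonomy perturbation gradient at a connection whose holonomy around every $\gamma_{k,z}$ is trivial lands in the span of $df$ at the identity tuple, and then invoking $\text{Ad}$-invariance to kill it. One should also double-check the easy but essential point that conjugation invariance of $f$ on $SU(2)^m$ (diagonal conjugation) is enough: the differential at the fixed point of the diagonal $SU(2)$-action is a diagonally-$\text{Ad}$-invariant element of $\bigoplus_k \su(2)$, and since the diagonal action is still just the adjoint action on each summand with no invariants, the conclusion stands. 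Everything else is bookkeeping with the support of $\mu$ and the bump forms.
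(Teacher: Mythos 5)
Your argument is correct, and in fact the paper records this lemma without proof, so you have supplied a justification it omits. Two small points. First, a minor imprecision: the perturbation $h$ is defined on the $U(2)$ configuration space via the $SU(2)$-valued holonomies of a lift $A$ of the Klein-four connection $B=A_{\mathrm{ad}}$; since the mod-2 trivial loops have trivial $B$-holonomy, the corresponding $SU(2)$ holonomies are central, i.e.\ the tuple is $(\pm 1,\ldots,\pm 1)$ rather than necessarily $(1,\ldots,1)$. This does not affect your conclusion, because the differential of a diagonally conjugation-invariant $f$ vanishes at \emph{any} central tuple by exactly the $\mathrm{Ad}$-invariance argument you give (each component of $df$ is an $\mathrm{Ad}$-invariant element of $\su(2)^{*}$, hence zero). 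Second, there is a shorter route that the paper's own framework suggests and which is worth comparing with yours: by the preceding lemma, $h$ is $H^1(Y;\Z_2)$-invariant, so $\nabla h(A)$ must lie in the $S_A$-invariant part of the tangent space at $[A]$; when $S_A\cong V_4$ the tangent space decomposes as $\bigoplus_i \Omega^1(Y;\lambda_i)$ with $a_i$ acting by $+1$ on the $i$-th summand and $-1$ on the other two, so the $V_4$-invariant part is zero (this is the same observation as the paper's statement that $V_A=0$ in the $V_4$ case) and $\nabla h(A)=0$ with no computation of holonomies at all. Your holonomy computation is a more hands-on, self-contained version of the same vanishing; the equivariance argument buys brevity and makes clear that only mod-2 triviality of $\Gamma$, not the explicit form of the gradient, is being used. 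Either way, the perturbed equation reduces to flatness at such $[A]$, giving $\mathcal{B}_{\su(E)}^{V_4}\subseteq \M_{\su(E)}^h$ as required.
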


\noindent As such perturbations are $H^1(Y;\Z_2)$-equivariant, a similar statement holds for the connections in the $U(2)$ moduli space $\M^h_E$ lying above Klein-four connections.

Our goal is to find a mod-2 trivial $\Gamma$ such that for small, generic $h\in\mathcal{H}_\Gamma$ the moduli space $\M_E^h$ is non-degenerate. Section 5 of \cite{rs1} shows that this can be achieved if $\Gamma$ is {\emph{abundant}} at each projectively flat $[A]\in \M_E$. We need to slightly generalize the definition of abundancy given in \cite{rs1}, which only considers stabilizers isomorphic to $\{1\}$ and $\Z_2$. To begin, note that $H^1(Y;A_\text{ad})$, the Zariski tangent space to $[A]$ in $\M_E$, carries an action by the stabilizer, denoted
\begin{equation}
	S_A:=  \text{Stab}_{H^1(Y;\Z_2)}[A] = \text{Stab}_{\G_{\su(E)}}(A_\text{ad}). \label{eq:stab5}
\end{equation}
We remark that the second equality in (\ref{eq:stab5}) is not true in general, and is contingent upon the non-trivial admissibility of $E$.
Recall that $S_A$ is one of $\{1\}$, $\Z_2$ or $V_4$. Now, decompose the tangent space into its $S_A$-invariant subspace $V_A$, and the $S_A$-equivariant orthogonal complement to $V_A$:\\
\[
	H^1(Y;A_\text{ad}) = V_A\oplus V_A^\perp.
\]
The space $V_A$ is the Zariski tangent space of $[A]$ internal to the stratum of $\M_E$ consisting of connection classes with stabilizer isomorphic to $S_A$. The complement $V_A^\perp$ is the Zariski normal bundle fiber in $\M_E$ at $[A]$ relative to the aforementioned stratum. For a vector space $W$ we write $\text{Sym}(W)$ for the space of symmetric bilinear forms on $W$. If $W$ has a linear $G$-action by some group $G$, we write $\text{Sym}(W)^G$ for the forms that are $G$-invariant.\\

\begin{defn}\label{defn:1}
A mod-2 trivial $\Gamma$ is {\emph{abundant}} at a projectively flat $[A]\in \M_E$ if there exist perturbations $\{h_i\}_{i=1}^n \subset \mathcal{H}_\Gamma$ and some $k$ such that $Dh_i(A)=0$ for $k+1\leq i\leq n$, and such that the following map that is defined from $\R^n$ to {\emph{$\text{Hom}(V_A,\R)\oplus \text{Sym}(V_A^\perp)^{S_A}$}} is surjective:
\begin{equation}
    (x_1,\ldots,x_n) \;\;\longmapsto \;\; \left( \;\sum_{i=1}^k Dh_i(A), \;\; \sum_{i=k+1}^n x_i\; {\emph{\text{Hess}}}\; h_i(A)\; \right).\label{eq:hess}
\end{equation}
\end{defn}

\vspace{.35cm}

\noindent Note that if $S_A$ is trivial, then $V_A$ accounts for the entire tangent space, and in particular $V_A^\perp=0$. Thus only the left-hand factor of the map (\ref{eq:hess}) is relevant. This is the condition of `first order abundancy,' and is sufficient to achieve non-degeneracy for small, generic perturbations when there are no other (lower) strata to consider. At the other extreme, when $S_A$ is isomorphic to $V_4$, we have $V_A=0$. In this case (\ref{eq:hess}) reduces to a condition purely of `second order abundancy.' 

If $S_A$ is isomorphic to $\Z_2$, then $V_A$ and $V_A^\perp$ are the $+1$ and $-1$ eigenspaces of the $\Z_2$-action, respectively, and are $V_+$ and $V_-$ in the notation of \cite{rs1}. In this case $\text{Sym}(V_A^\perp)^{S_A}$ is the same $\text{Sym}(V_-)$. Our choice of $\text{Sym}(V_A^\perp)^{S_A}$ in Defn. \ref{defn:1} is sufficient for the arguments of Section 5 in \cite{rs1} to go through in part because a generic element therein is non-degenerate, cf. the proof of Prop. 5.4 in \cite{rs1}. When $S_A$ is isomorphic to $\{1\}$ or $\Z_2$, our definition agrees with that of \cite{rs1}.

We are left with producing a mod-2 trivial $\Gamma$ which is abundant for all $[A]\in \M_E$. To this end, the work of Ruberman and Saveliev implies the following:\\

\begin{lemma}[{\cite[Prop. 5.2]{rs1}}]
    There exists a mod-2 trivial $\Gamma$ that is abundant for all connections in $\M_E$ that do not descend to $SO(3)$ Klein-four connections.\label{lemma:rubsav}\\
\end{lemma}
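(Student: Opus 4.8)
The plan is to deduce the statement directly from \cite[Prop. 5.2]{rs1}, after checking that its hypotheses hold in our situation. At a projectively flat $[A]\in\M_E$ that does not descend to a Klein-four connection, the stabilizer $S_A$ is $\{1\}$ or $\Z_2$, which are exactly the two cases treated in \cite{rs1}; and, as noted after Definition \ref{defn:1}, on those two strata our notion of abundancy coincides with theirs. The only global input the construction of \cite{rs1} uses is that $\M_E$ is a finite set of irreducible connections, which holds here by the non-trivial admissibility of $E$ together with the finiteness of $\M_E$ recorded above. So the point to verify is that the \emph{proof} of \cite[Prop. 5.2]{rs1} invokes nothing about $Y$ beyond these two facts; granting this, the lemma is literally their proposition.

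For orientation I would recall the shape of that argument. Since $\M_E$ is finite and enlarging $\Gamma$ enlarges $\mathcal{H}_\Gamma$ and hence preserves abundancy at every connection, it suffices to build an abundant mod-2 trivial system at each $[A]$ separately and then isotope the finitely many cores so that they share one normal disk and are otherwise disjoint. For a loop $\gamma$ based at that disk and a conjugation-invariant function $f$ of $\text{Hol}_\gamma$, the functional $Dh(A)$ on $H^1(Y;A_\ad)$ is obtained by pairing a tangent vector with the first variation of $\text{Hol}_\gamma$; as $\gamma$ runs over loops generating $\pi_1(Y)$ and $f$ over suitable class functions, these functionals exhaust $\Hom(H^1(Y;A_\ad),\R)=\Hom(V_A,\R)$ when $S_A=\{1\}$. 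When $S_A\cong\Z_2$ the perturbation is $H^1(Y;\Z_2)$-equivariant, so $Dh(A)$ automatically lands in the invariant part $\Hom(V_A,\R)$ and the same loops handle the first-order factor of the map in Definition \ref{defn:1}; for the second-order factor one uses functions $f$ vanishing to first order at the relevant conjugacy class, so that $Dh(A)=0$, and arranges that the Hessians $\text{Hess}\,h(A)$ along the chosen loops span $\text{Sym}(V_A^\perp)^{S_A}=\text{Sym}(V_-)$. Finally each core loop is made mod-2 trivial, for instance by replacing it with the loop running twice around it, which does not disturb the spanning statements since those depend only on holonomy data that can be re-absorbed into the choice of $f$.

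The substantive part of this is the second-order (Hessian) analysis at the $\Z_2$-reducible strata, carried out carefully in \cite{rs1} — compare the proof of Prop. 5.4 there — together with the simultaneous bookkeeping of mod-2 triviality and of the common normal disk. I therefore do not expect a genuinely new obstacle: the only thing demanding attention is the verification that these local constructions in \cite{rs1} are insensitive to the global topology of $Y$ beyond the finiteness and irreducibility of $\M_E$, which is the case.
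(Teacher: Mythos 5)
Your proposal is correct and matches the paper's treatment: the lemma is precisely an import of \cite[Prop.~5.2]{rs1}, justified—as you note—by the facts that connections in $\M_E$ not descending to Klein-four connections have stabilizer $\{1\}$ or $\Z_2$ and that on those strata the paper's notion of abundancy coincides with that of \cite{rs1}. The paper offers no independent argument beyond this citation, so your additional sketch of the Ruberman--Saveliev construction is harmless extra detail rather than a divergence in approach.
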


\noindent This allows us to focus on the situations in which $S_A$ is isomorphic to $V_4$, the case in which $A_\text{ad}$ is a Klein-four connection. We have the following facts, used in \cite[\S 5.5]{rs1}, stated informally:\\
\begin{itemize}
    \item If $\Gamma$ is abundant, and $\Gamma'$ is close to $\Gamma$, then $\Gamma'$ is abundant.
    \item If $\Gamma$ is abundant and $\Gamma\subset \Gamma'$, then $\Gamma'$ is abundant.
\end{itemize}
\vspace{.25cm}

\noindent In these situations, we are assuming that $\Gamma$ and $\Gamma'$ have the same fixed normal disk with basepoint. Now suppose we can show, for each $A$ with $S_A$ isomorphic to $V_4$, the existence of a mod-2 trivial $\Gamma$ abundant at $[A]$. Then it is straightforward to conclude, using these two facts and Lemma \ref{lemma:rubsav}, that there exists a mod-2 trivial $\Gamma'$ abundant at all $[A]\in \M_E$. Thus the following lemma completes the proof of Theoerem \ref{thm:main}:\\

\begin{lemma}
    There is an abundant mod-2 trivial $\Gamma$ for any $[A]\in \M_E$ that descends to an $SO(3)$ Klein-four connection. \\
\end{lemma}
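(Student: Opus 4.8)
The plan is to reduce the claim, via the two stability facts quoted just before the lemma, to a purely local construction at a single Klein-four connection $[A]$, and then to exhibit enough holonomy perturbations supported on mod-2 trivial loops to surject onto $\text{Sym}(V_A^\perp)^{V_4}$. Since $S_A \cong V_4$ forces $V_A = 0$, Definition \ref{defn:1} asks only for second-order abundancy: we must find $\{h_i\} \subset \mathcal{H}_\Gamma$ with $Dh_i(A) = 0$ and with $\sum x_i\, \text{Hess}\, h_i(A)$ spanning $\text{Sym}(V_A^\perp)^{S_A}$ as a vector space. The first step is to identify this target space concretely. Under the splitting $\su(E) = \lambda_1 \oplus \lambda_2 \oplus \lambda_3$, the flat connection $A_\text{ad}$ has holonomy in $V_4 \subset SO(3)$, and $H^1(Y; A_\text{ad})$ decomposes as $\bigoplus_i H^1(Y; \lambda_i)$ (the trivial summand $H^1(Y;\R)$ with its non-trivial-bundle coefficients does not appear since the $\lambda_i$ are non-trivial). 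The $V_4$-action permutes signs on the three summands in the standard way, so $V_A^\perp = H^1(Y;\lambda_1) \oplus H^1(Y;\lambda_2) \oplus H^1(Y;\lambda_3)$ and $V_A = 0$ recovers the fact that the $V_4$-stratum is discrete. A short representation-theory computation shows that the $V_4$-invariant symmetric forms on $V_A^\perp$ are exactly the block-diagonal ones: $\text{Sym}(V_A^\perp)^{V_4} \cong \text{Sym}(H^1(Y;\lambda_1)) \oplus \text{Sym}(H^1(Y;\lambda_2)) \oplus \text{Sym}(H^1(Y;\lambda_3))$, since any invariant form must kill the cross terms $H^1(Y;\lambda_i) \otimes H^1(Y;\lambda_j)$ for $i \neq j$.

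Next I would compute the Hessian of a holonomy perturbation at $A$. For a perturbation $h$ built from a conjugation-invariant $f$ on loops $\gamma_{k}$ through a common normal disk, the first derivative $Dh(A)$ vanishes automatically at a flat connection when $f$ is chosen to vanish to first order at the relevant holonomy tuple (e.g.\ take $f$ a class function vanishing quadratically there, or subtract its linearization — this is exactly how \cite{rs1} arranges $Dh_i(A)=0$ in the $\Z_2$ case). The Hessian $\text{Hess}\,h(A)$ is then a symmetric bilinear form on $H^1(Y;A_\text{ad})$ which, by the standard holonomy-perturbation formula, is expressed through the $\su(2)$-valued (here $\so(3)$-valued) holonomy-derivative data along the cores $\gamma_k$ and the Hessian of $f$ at the flat holonomies. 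The key point is that, because $A_\text{ad}$ has abelian ($V_4$) holonomy, the relevant holonomy functionals along a loop $\gamma$ factor through the three characters $\lambda_i$ evaluated on $[\gamma] \in H_1(Y;\Z_2)$; thus the resulting Hessian is automatically block-diagonal in the $\bigoplus_i H^1(Y;\lambda_i)$ decomposition, landing inside $\text{Sym}(V_A^\perp)^{V_4}$ with no further symmetrization needed. On the $i$-th block the form produced is, up to the choice of $f$, the pullback of a symmetric form on $H^1$ via the period map $H^1(Y;\lambda_i) \to \R$ sending a harmonic representative to its pairing with the loop class $[\gamma_k] \otimes (\text{section of }\lambda_i)$.

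The main obstacle — and the heart of the lemma — is therefore a linear-algebra surjectivity statement: one must choose a mod-2 trivial collection $\Gamma = \{\gamma_k\}$ of loops whose associated period functionals on each $H^1(Y;\lambda_i)$ are rich enough that quadratic combinations of them span all of $\text{Sym}(H^1(Y;\lambda_i))$, simultaneously for $i = 1,2,3$. Concretely, on each block we need loops $\gamma_k$ (necessarily null in $H_1(Y;\Z_2)$, hence with well-defined lifts to the double covers defining the $\lambda_i$) whose holonomy-period vectors $\xi_k^{(i)} \in H^1(Y;\lambda_i)^\ast$ span that dual space; then products $\xi_k^{(i)} \otimes \xi_l^{(i)}$ span $\text{Sym}$, and choosing $f$ to produce a single such rank-one form at a time (by picking $f$ supported near one group element, or by taking Hessians of functions of a single holonomy variable) gives surjectivity of the map \eqref{eq:hess}. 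That such mod-2 trivial loops exist is a Poincaré-duality/general-position argument exactly parallel to \cite[Prop.\ 5.2]{rs1} — one represents a spanning set of classes in $H_1$ by embedded loops, doubles or bands them together to kill their mod-2 classes while preserving the relevant pairings with $H^1(Y;\lambda_i)$, and threads them through a common normal disk — so the new content over \cite{rs1} is only the bookkeeping that one set of loops works for all three line bundles $\lambda_i$ at once, which follows since $H_1(Y;\Z_2)$ is a common receptacle and a generic spanning set of loops pairs nondegenerately with every $H^1(Y;\lambda_i)$. Combining this local abundancy at each $V_4$-connection with Lemma \ref{lemma:rubsav} and the two stability facts (take the union of the finitely many local $\Gamma$'s, then perturb into general position) yields a single mod-2 trivial $\Gamma'$ abundant at all of $\M_E$, completing the proof of Theorem \ref{thm:main}.
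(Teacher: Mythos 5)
Your reduction to second-order abundancy at a single Klein-four class, and your identification of the target $\text{Sym}(V_A^\perp)^{S_A}$ with $\bigoplus_{i=1}^3\text{Sym}(H^1(Y;\lambda_i))$, agree with the paper. The argument breaks down, however, at the step where you claim to realize ``a single rank-one form at a time'' on an individual block by taking $f$ to be a function of one holonomy variable or supported near one group element. Because the loops $\gamma_k$ are mod-2 trivial and the holonomy representation of a Klein-four connection factors through $H_1(Y;\Z_2)$, the holonomy of $A$ around each $\gamma_k$ is central; the Hessian at a central tuple of any function on $SU(2)^m$ invariant under simultaneous conjugation is an $\text{Ad}$-invariant symmetric form on $\su(2)^m\cong\R^3\otimes\R^m$, and since the $SO(3)$-invariant symmetric forms on $\R^3$ are spanned by the Killing form alone, every such Hessian is $\sum_{k,l}c_{kl}\langle X_k,X_l\rangle$. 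Composing with the holonomy differentials, every achievable $\text{Hess}\,h(A)$ has the shape $\sum_{k,l}c_{kl}\bigl(\xi_k^{(1)}\odot\xi_l^{(1)},\;\xi_k^{(2)}\odot\xi_l^{(2)},\;\xi_k^{(3)}\odot\xi_l^{(3)}\bigr)$: the three blocks always appear coupled with the same coefficient $c_{kl}$, and no choice of $f$ isolates one $\lambda_i$.

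What surjectivity of \eqref{eq:hess} actually requires is that the joint period functionals $\xi_k=(\xi_k^{(1)},\xi_k^{(2)},\xi_k^{(3)})$ of your mod-2 trivial loops span the full dual of $\bigoplus_i H^1(Y;\lambda_i)$ --- equivalently, that the loops jointly separate points of the whole twisted cohomology --- and not merely that each family $\{\xi_k^{(i)}\}_k$ spans $H^1(Y;\lambda_i)^{*}$ separately; a single loop pairing nontrivially with each factor satisfies the latter but not the former, and its coupled rank-one forms then miss most of the target. This joint spanning statement is the real content of the lemma, and your ``double or band them together'' general-position sketch does not supply it: you must exhibit loops that are null in all of $H_1(Y;\Z_2)$ yet whose twisted periods detect every nonzero element of $\bigoplus_i H^1(Y;\lambda_i)$. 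The paper manufactures exactly these by passing to the $V_4$-cover $p:Y'\to Y$ on which $A_{\text{ad}}$ trivializes: Herald's abundancy at the trivial connection on $Y'$ provides loops detecting all of $H^1(Y';\R)$, the deck-group decomposition $H^1(Y';\R)=H^1(Y;\R)\oplus\bigoplus_i H^1(Y;\lambda_i)$ shows the three twisted summands sit independently inside it, and surjectivity of \eqref{eq:hess} becomes surjectivity of the restriction map $\text{Sym}(H^1(Y';\R))\to\bigoplus_i\text{Sym}(H^1(Y;\lambda_i))$. Your assembly of the local statements into a global $\Gamma'$ via the two stability facts is fine, but the local step needs this covering-space argument (or an equivalent substitute) rather than the block-isolation move.
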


\begin{proof}
	We follow the method used in \cite{rs1} of passing to a finite cover. Let $A$ be a projectively flat connection on $E$ with stabilizer $S_A$ isomorphic to $V_4$. The $SO(3)$ connection $A_\text{ad}$ is compatible with a splitting $\su(E)= \lambda_1\oplus \lambda_2\oplus \lambda_3$ in which the $\lambda_i$ are non-trivial and distinct real line bundles. The stabilizer $S_A$ is given explicitly by
\[
	S_A = \{0,a_1,a_2,a_3\} \subset H^1(Y;\Z_2), \qquad a_i = w_1(\lambda_i).
\] 
Here, $a_i$ corresponds to the gauge transformation of $\su(E)$ that simultaneously reflects $\lambda_{i+1}$ and $\lambda_{i+2}$, while fixing $\lambda_i$, where indices are taken mod 3. Define a homomorphism $\pi_1(Y)\to S_A$ by
\[
	\gamma\;\; \longmapsto \;\;  a_1(\gamma)a_1 + a_2(\gamma)a_2 + a_3(\gamma)a_3.
\]
Let $p:Y'\to Y$ be the covering space corresponding to this homomorphism. Under this covering $A_\text{ad}$ pulls back to a trivial connection, denoted $A'_\text{ad}$, cf. \cite[Lemma 5.6]{rs1}. In particular, each of $\lambda_i$ pulls back under $p$ to a trivial real line bundle $\lambda'_i$. Note that the covering transformation group of $Y'\to Y$ is the Klein-four group $S_A$.

It is known \cite[Prop. 67 \& Lemma 58]{herald} that there is some $\Gamma'$, a collection of embedded solid tori in $Y'$, that is abundant at the trivial connection $A'_\text{ad}$ in the following sense: there exist perturbations $\{ h_i\}_{i=1}^n \subset \mathcal{H}_{\Gamma'}$ such that the map from $\R^n$ to $\text{Sym}(H^1(Y';A'_\text{ad}))^{SO(3)}$ given by

\begin{equation}
    (x_1,\ldots,x_n) \;\;\longmapsto \;\;  \sum_{i=1}^n x_i\; \text{Hess}\; h_i(A'_\text{ad})\label{eq:abundancy2}
\end{equation}
is surjective. The appearance of the $SO(3)$ here is the gauge stabilizer of the connection $A_\text{ad}'$. Let $\Gamma$ be the image of $\Gamma'$ under $p$, slightly perturbed in $Y$ so that it is of the form described at the beginning of this section. By construction, $\Gamma$ is mod-2 trivial. Consider the following map:
\begin{equation}
        \text{Sym}(H^1(Y';A_\text{ad}'))^{SO(3)}\longrightarrow \text{Sym}(H^1(Y;A_\text{ad}))^{V_4}.\label{eq:sym}
\end{equation}
Here the $V_4$ refers to $S_A$. The map (\ref{eq:hess}) is the composition of (\ref{eq:abundancy2}) with (\ref{eq:sym}). Thus, to show abundancy of $\Gamma$ at $A$, it suffices to show that (\ref{eq:sym}) is surjective. The map (\ref{eq:sym}) is induced by the pull-back map:

\begin{equation}
\begin{CD}
{\scriptstyle{V_4}} \;\;@.  \rcirclearrowright H^1(Y;A_\text{ad})   @>{p^\ast}>> H^1(Y';A_\text{ad}') \lcirclearrowleft {\scriptstyle{SO(3)}}
\label{eq:pullback}
\end{CD}
\end{equation}
\vspace{.10cm}

\noindent This map is equivariant with respect to the indicated gauge stabilizer actions, upon considering $V_4$ as a subgroup of $SO(3)$. More precisely, $V_4$ refers to the $\mathcal{G}_{\su(E)}$-stabilizer of $A_\text{ad}$, while $SO(3)$ refers to the $\mathcal{G}_{p^\ast\su(E)}$-stabilizer of $A'_\text{ad}$.

To show that (\ref{eq:sym}) is surjective, consider the following two decompositions:
\begin{equation}
    H^1(Y;A_\text{ad}) = \bigoplus_{i=1}^3 H^1(Y;\lambda_i),\qquad H^1(Y';A'_\text{ad}) = H^1(Y';\R)\otimes\R^3.\label{eq:decomps}
\end{equation}
Implicit here is a trivialization for each $\lambda'_i$, and the $\R^3$ should be thought of as coming from the induced trivialization of $\lambda'_1\oplus\lambda_2'\oplus\lambda_3'$. The map (\ref{eq:pullback}) respects these decompositions. In the left-hand decomposition of (\ref{eq:decomps}), the $V_4$ action is as follows: $a_i$ acts as $-1$ on $H^1(Y;\lambda_{i+1})\oplus H^1(Y;\lambda_{i+2})$, and $+1$ on $H^1(Y;\lambda_i)$. In the tensor product appearing in (\ref{eq:decomps}), the $SO(3)$-action on $\R^3$ is standard, and is trivial on $H^1(Y';\R)$. From these descriptions, it is straightforward to verify that these decompositions induce identifications between the domain and codomain of (\ref{eq:sym}) with $\text{Sym}(H^1(Y';\R))$ and $\bigoplus_{i=1}^3\text{Sym}(H^1(Y;\lambda_i))$, respectively.
The map (\ref{eq:sym}) can then be seen as the map 
\begin{equation}
    \text{Sym}(H^1(Y';\R)) \longrightarrow \;\bigoplus_{i=1}^3\;\text{Sym}(H^1(Y;\lambda_i)),\label{eq:newsym}
\end{equation}
in which each of the three components is the map induced by pull-back, after trivializing $\lambda_i'$. Now, (\ref{eq:newsym}) is surjective because the three relevant pull-back maps are injective, and their three images pairwise intersect at $0$. This is evident from the decomposition
\[
    H^1(Y';\R) = H^1(Y;\R)\oplus H^1(Y;\lambda_1)\oplus H^1(Y;\lambda_2)\oplus H^1(Y;\lambda_3),
\]
which is induced by the covering transformation group $S_A$ acting on $H^1(Y';\R)$. This action should not to be confused with the gauge stabilizer action of $S_A$ on $H^1(Y;A_\text{ad})$ which was used above. The summand $H^1(Y;\R)$ is the invariant subspace under this action, while $H^1(Y;\lambda_i)$ is the complement of $H^1(Y;\R)$ inside the invariant subspace for the subgroup $\{0,a_i\}$.
\end{proof}

\vspace{.85cm}

\begin{remark}
For a discussion of some of the technical assumptions used here, see Section 5.6 of \cite{rs1}. For a detailed study of the abundancy of holonomy perturbations in the context of the equivariant Kuranishi method, see \cite{herald2}.
\end{remark}
\vspace{.45cm}

\section{Establishing the vanishing result}\label{sec:thm2}

Here we complete the proof of Theorem \ref{thm:2}. The remaining step is to use a realization result for the $\Z_2$-cohomology ring due to Turaev in conjunction with Corollary \ref{cor:main}. Recall that for a closed, oriented and connected 3-manifold we have the triple cup product form
\[
	u_Y:H^1(Y;\Z_2)\otimes H^1(Y;\Z_2)\otimes H^1(Y;\Z_2) \longrightarrow \Z_2,
\]
\[
	u_Y(a,b,c) = \left(a\cup b \cup c\right)[Y].
\]
The trilinear form $u_Y$ determines the $\Z_2$-cohomology ring of $Y$. It was originally proven by Postnikov that any symmetric trilinear form satisfying $u(a,a,b)=u(b,b,a)$ is realized by a closed, oriented and connected 3-manifold. Recall also that we have the linking form
\[
	L_Y:\text{Tor}H_1(Y;\Z)\otimes \text{Tor}H_1(Y;\Z) \longrightarrow \Q/\Z,
\]
which is a non-degenerate symmetric bilinear form. The linking form interacts with the $\Z_2$-cohomology ring in the following way. Let $\psi:\Z_2\to \Q/\Z$ be the injection defined by $\psi(k\;(\text{mod } 2)) = k/2$. Then for all $a,b\in H^1(Y;\Z_2)$ we have the relation
\begin{equation}
	\psi\left(u_Y(a,a,b)\right) = L_Y(a^\dagger,b^\dagger), \label{eq:star2}
\end{equation}
where for any $a\in H^1(Y;\Z_2)$ the element $a^\dagger\in \text{Tor}H_1(Y;\Z)$ is defined by the condition that $L_Y(a^\dagger,c) =\psi(a(c))$ for all $c\in \text{Tor}H_1(Y;\Z)$. Here we are of course identifying $H^1(Y;\Z_2)$ with $\text{Hom}(H_1(Y;\Z),\Z_2)$. An implication of Turaev's work is the following:\\

\begin{theorem}[\cite{turaev}]\label{thm:turaev}
	Let $H$ be a finitely generated abelian group, and let $u:{\emph{\text{Hom}}}(H,\Z_2)^{\otimes 3}\to \Z_2$ be a symmetric trilinear form. There exists a closed, orientable and connected 3-manifold $Y$ such that the pair $(H,u)$ is equivalent to $(H_1(Y;\Z),u_Y)$ if and only if there exists a non-degenerate symmetric bilinear form $L:{\emph{Tor}}H^{\otimes 2}\to \Q/\Z$ such that (\ref{eq:star2}) holds with $u_Y=u$ and $L_Y=L$.\\
\end{theorem}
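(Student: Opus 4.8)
\textbf{Proof proposal for Theorem \ref{thm:turaev}.}

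The plan is to recall Turaev's classification of linking forms on closed oriented $3$-manifolds and repackage it in cohomological language. The key fact, due to Turaev, is that the triple of invariants $(H_1(Y;\Z), L_Y, \text{something refining the cup product})$ of a closed oriented $3$-manifold is classified precisely by a compatibility relation between the linking form on the torsion and the mod-$2$ cohomology ring, with no further constraints. So the strategy has two halves: a \emph{necessity} direction, showing that any $Y$ satisfies the stated relation, and a \emph{realization} direction, producing a $Y$ from the algebraic data.

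For necessity, suppose $Y$ is closed, oriented and connected with $H_1(Y;\Z)\cong H$ and triple cup product $u_Y$. I would take $L = L_Y$, the ordinary linking form on $\text{Tor}\,H_1(Y;\Z)$, which is automatically a non-degenerate symmetric bilinear form into $\Q/\Z$. The relation (\ref{eq:star2}) is then exactly the statement that the Bockstein of $a\in H^1(Y;\Z_2)$ paired against $b$ via cup product equals a linking-form pairing. Concretely: $u_Y(a,a,b) = (\beta^1(a)\cup b)[Y]$ since $a^2 = \beta^1(a)$, and $\beta^1(a)$ is a $2$-torsion class in $H^2(Y;\Z)$; Poincar\'e duality identifies $(\beta^1(a)\cup b)[Y]$ with the evaluation of the $2$-torsion homology class $\text{PD}(\beta^1(a))$, which is $a^\dagger$ after the identifications, against $b$, and this evaluation is precisely $\psi^{-1}$ of the linking-form value $L_Y(a^\dagger,b^\dagger)$ by the standard relationship between the linking form and the cup-product/Bockstein pairing on a $3$-manifold. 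So (\ref{eq:star2}) holds with $L_Y = L$ and $u_Y = u$; this half is essentially a diagram chase through the definitions of $a^\dagger$, $\psi$, and Poincar\'e--Lefschetz duality with $\Z_2$ versus $\Q/\Z$ coefficients, using that $L_Y$ is symmetric.

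For the realization direction — which I expect to be the main obstacle, since it is where the actual content of Turaev's theorem lies — suppose we are given $(H,u)$ together with a non-degenerate symmetric $L:\text{Tor}\,H^{\otimes 2}\to \Q/\Z$ satisfying (\ref{eq:star2}). I would invoke Turaev's realization theorem for the pair $(H_1, \lambda)$ consisting of a first homology group and a linking form: every non-degenerate symmetric bilinear linking form on a finite abelian group arises from a closed oriented $3$-manifold, and one can additionally prescribe the free rank of $H_1$ (e.g. by connect-summing with copies of $S^1\times S^2$, which does not change the torsion or its linking form). This produces a $Y_0$ with $H_1(Y_0;\Z)\cong H$ and $L_{Y_0} = L$. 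The remaining task is to correct the triple cup product $u_{Y_0}$ to the prescribed $u$ without disturbing $H_1$ or $L$. Here one uses that the difference $u - u_{Y_0}$ is a symmetric trilinear form that \emph{vanishes on the ``diagonal''} terms $u(a,a,b)$ — precisely because both $u$ and $u_{Y_0}$ induce the same pairing $L$ on the torsion via (\ref{eq:star2}), and on the free part $u(a,a,b)$ is constrained the same way (it is $0$ mod $2$ after a basis change, or matched by Postnikov's condition). A symmetric trilinear form over $\Z_2$ that kills all $u(a,a,b)$ is an \emph{alternating} $3$-form, and such alternating $3$-forms are realized, and can be added, by surgery on the $3$-manifold along suitable framed links (the standard ``cup-product surgery'' move, realizing the $(4,4,4)$-type adjustments seen in the Borromean examples above) without changing the torsion linking form. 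Composing these surgeries with $Y_0$ yields the desired $Y$. I would cite \cite{turaev} for the precise surgery-theoretic statements; the care needed is in checking that the alternating-form adjustments genuinely leave $\text{Tor}\,H_1$ and $L_Y$ untouched, and that the free-part contribution to $u$ (the Postnikov constraint $u(a,a,b)$ even for $a$ of infinite order) is compatible — this bookkeeping over $\Z_2$ versus $\Z$ is the fiddly part.
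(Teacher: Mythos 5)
The paper does not actually prove this statement: it is imported verbatim from Turaev's work (``An implication of Turaev's work is the following''), so there is no internal proof to compare your argument against. Judged on its own terms, your sketch has the right architecture. The necessity half is essentially correct and is, in fact, already recorded in the paper as the identity (\ref{eq:star2}) together with the standard properties of $L_Y$; your derivation via $a^2=\beta^1(a)$, the integral Bockstein, and Poincar\'e duality is the standard one. You are also right that symmetry of $L$ forces $u(a,a,b)=u(b,b,a)$, so Postnikov's constraint is automatic, and that injectivity of $\psi$ makes the difference $u-u_{Y_0}$ vanish on the diagonal, hence alternating.

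The realization half, however, is where all of the content of Turaev's theorem lives, and there you defer the two load-bearing steps to citations or assertions. First, the existence of $Y_0$ realizing an arbitrary nondegenerate symmetric linking pairing $(\,\mathrm{Tor}\,H, L)$ is a genuine theorem (Wall, Kawauchi--Kojima) and should be named as such. Second, and more seriously, the claim that an \emph{arbitrary} alternating trilinear form on $H^1(Y_0;\Z_2)$ can be added to $u_{Y_0}$ by surgeries that leave both $H_1$ and $L_{Y_0}$ untouched is exactly the hard point; it amounts to (a) knowing that Borromean-type ($Y$-move) surgeries preserve the pair $(H_1,L)$, which is one direction of Matveev's theorem, and (b) computing that such a surgery can change $u(a,b,c)$ by the elementary alternating form $a\wedge b\wedge c$ even when some of $a,b,c$ are dual to $2$-torsion classes (compare $(2,2,2)$-surgery on the Borromean rings versus on the unlink: same $H_1\cong\Z_2^3$ and linking form, triple products differing by $a\wedge b\wedge c$). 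You flag this as ``the fiddly part'' but do not supply it, so what you have is a correct reduction of the theorem to known surgery-theoretic inputs rather than a proof. That is a reasonable thing to write down for a result the paper itself only cites, but the gap at step (b) should be acknowledged as the actual substance of \cite{turaev}.
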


\begin{proof}[Proof of Theorem \ref{thm:2}] Let $Y$ be such that $k(Y)\geq 4$, and suppose $x$ is not a cup-square. Equivalently, $x$ has no order 2 lift to $H^2(Y;\Z)$. We aim to show that $v_Y(x)\equiv 0$ (mod 2). We choose an isomorphism
\[
    H_1(Y;\Z) \simeq \bigoplus_{i=1}^4 A_i \oplus B
\]
where $A_i$ is an abelian group of the form $\Z_{2^k}$ for $k>1$ or a copy of $\Z$. Make these choices so that $x$ has a lift to $H^2(Y;\Z)$ with support in $A_1$, not of order 2, which can be done by our assumption on $x$. Recall that $\text{Tor}H_1(Y;\Z)$ is the torsion of $H^2(Y;\Z)$ by the Universal Coefficients Theorem. Now define $H$ by replacing the $A_i$ summands with copies of $\Z$:
\[
    H:=\bigoplus_{i=1}^4 A'_i \oplus B, \qquad A_i' := \Z
\]
With our identifications we have a natural isomorphism between $H^1(Y;\Z_2)$ and $\text{Hom}(H,\Z_2)$, and with this understood we set $u:=u_Y$. Also, noting that $\text{Tor} H$ is simply $\text{Tor} H_1(Y;\Z)$ with some summands possibly thrown away, we define $L$ to be the restriction of $L_Y$. With our identifications, the terms appearing in (\ref{eq:star2}) are unchanged. Thus Theorem \ref{thm:turaev} implies the existence of a closed, oriented and connected 3-manifold $Z$ with first homology and triple cup product form given by $(H,u)$. By our choices, $x$ has no torsion lifts, and is thus equal to $w_2(E)$ for a non-trivial admissible $U(2)$ bundle $E$ over $Z$. Now Poudel's result in the guise of Corollary \ref{cor:main} says $v_Z(x) \equiv 0$ (mod 2), since $b_1(Z)\geq 4$. Since the $\Z_2$-cohomology rings of $Y$ and $Z$ are the same, we then get $v_Y(x) \equiv 0$ (mod 2). The independence of $x$ as a choice having no order 2 lift to $H^2(Y;\Z)$ is established in much the same way as the vanishing.
\end{proof}

\vspace{.25cm}

\section{Examples and properties}\label{sec:ex}

\noindent In this section we prove Proposition \ref{prop:main}, which yields examples of $v_Y(x)$ (mod 2) for Seifert fibered spaces. We then produce a connected sum formula for the parity of $v_Y(x)$. Finally, we illustrate how to compute $v_Y(x)$ for double branched covers of links.\\
\vspace{.65cm}

\noindent {\bf{Seifert fibered spaces.}} Let $Y$ be a Seifert fibered 3-manifold over an oriented base orbifold, with Seifert invariants $(g,b,(\alpha_1,\beta_1),\ldots,(\alpha_r,\beta_r))$. Here $g$ is the genus of the base orbifold. The mod 2 cohomology ring of $Y$ is completely described in \cite{adhsz}.\\

\begin{lemma} Suppose $x\in H^2(Y;\Z_2)$ is not a square. If any of the $\alpha_i$ are even, or if all $\alpha_i$ are odd and $b+\sum \beta_i \equiv 1 \mod 2$, then $v_Y(x)=0$.\\
\end{lemma}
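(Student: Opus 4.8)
The plan is to reduce the statement to a computation with the $\Z_2$-cohomology ring of $Y$, using the description of that ring from \cite{adhsz} together with the bijection
\[
    \BadE^{V_4} \overset{1:1}\longleftrightarrow \{\{a,b,c\}\in V_Y : ab+bc+ac = x\}
\]
established in Section \ref{sec:klein4}, so that $v_Y(x) = |\BadE^{V_4}|$. The key point is that $v_Y(x)$, being the order of a set acted on by nothing in particular, is most naturally attacked by exhibiting an explicit fixed-point-free involution on the set of triples $\{a,b,c\}\in V_Y$ with $ab+bc+ac=x$; whenever such an involution exists, $v_Y(x)$ is even, and in the cases at hand we want to go further and show the set is empty. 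So first I would recall from \cite{adhsz} the generators of $H^1(Y;\Z_2)$ and the cup product structure for a Seifert fibered space over an orientable base orbifold of genus $g$, paying attention to the class dual to the fiber (call it $h$) and the classes $x_1,\dots,x_r$ associated to the exceptional fibers, together with the $2g$ classes from the base. The relations encode the $\alpha_i$ mod $2$ and the quantity $b + \sum\beta_i$ mod $2$.

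Next I would split into the two hypothesized cases. If some $\alpha_i$ is even, then the corresponding exceptional fiber contributes a class whose presence forces a constraint on which elements of $H^2(Y;\Z_2)$ can be written in the form $ab+bc+ac$ for a triple summing to zero; concretely, the sub-square-zero condition $a+b+c=0$ forces $ab+bc+ac = a^2+ab+b^2$ (using $c=a+b$), an expression that always lies in the image of the cup-square map composed with the structure coming from the even $\alpha_i$ — I would check that under the \cite{adhsz} presentation every such element is in fact a cup-square, contradicting the hypothesis that $x$ is not a square, hence the solution set is empty and $v_Y(x)=0$. For the case where all $\alpha_i$ are odd but $b+\sum\beta_i$ is odd, the obstruction is of a similar flavour: the parity condition on $b+\sum\beta_i$ controls whether $h^2$ vanishes (or more precisely the value of a certain Bockstein), and when it is odd the cohomology ring has the property that $\{a,b,c\}\mapsto a^2+ab+b^2$ again only hits cup-squares, so once more no non-square $x$ is achievable. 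The bookkeeping is cleanest if I first reduce to the invariant subspace $\ker(\beta^1)$ — i.e. use $k(Y)$ and the fact that for $a\in\ker(\beta^1)$ one has $a^2=0$ — and observe that $ab+bc+ac$ for $a+b+c=0$ is a symmetric function that is forced into a small subspace.

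I expect the main obstacle to be translating the combinatorial condition "$x$ is not a cup-square" into the precise statement that the affine-type set $\{\{a,b,c\}\in V_Y : ab+bc+ac=x\}$ is empty, without doing this case-by-case on all the generators. The honest way is: the map $q\colon H^1(Y;\Z_2)\to H^2(Y;\Z_2)$, $q(a)=a^2$, is the Bockstein $\beta^1$; the map $\{a,b,c\}\mapsto ab+bc+ac$ can, after substituting $c=a+b$, be rewritten so that its image lies in $q(H^1) + (\text{something determined by the Seifert data})$. Under the two hypothesized conditions on the $\alpha_i$ and $b+\sum\beta_i$, that "something" collapses into $q(H^1)$ — this is exactly where the \cite{adhsz} presentation does the work — so the image is contained in the set of cup-squares, and a non-square $x$ cannot appear. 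Establishing this containment cleanly from the published presentation of the ring is the crux; everything after it (concluding $v_Y(x)=0$) is immediate. If a fully structural argument proves awkward, a fallback is to use the classification of such Seifert spaces by $b_1(Y)\in\{2,3\}$ and $b_1(2)=3$ noted after Proposition \ref{prop:main}, reducing to a finite check on rank-$3$ cohomology rings, combined with the realization/independence results (Theorem \ref{thm:2}, Corollary \ref{cor:main}) to pin down the parity.
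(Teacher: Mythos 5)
Your proposal is correct and follows essentially the same route as the paper: substitute $c=a+b$ to rewrite $ab+bc+ac$ as $a^2+ab+b^2$, then use the presentation of $H^*(Y;\Z_2)$ from \cite{adhsz} to show that in the stated cases every product $ab$ (hence every such expression, since the squaring map is additive) is a cup-square, so a non-square $x$ is never realized and the solution set is empty. The paper packages this as the observation that $\{a^2\}=\{ab\}$ in these cases; your digressions about a fixed-point-free involution and the finite-check fallback are unnecessary but harmless.
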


\begin{proof}
We begin with the following easily verified observation. In general, we have
\begin{equation}
    \{ a^2 : \; a\in H^1(Y;\Z_2)\} \subset \{ ab: \; a,b\in H^1(Y;\Z_2) \}.\label{eq:square}
\end{equation}
When these sets are equal, then $v_Y(x)=0$. For if the triple $\{a,b,a+b\}\in V_Y$ had $a^2+b^2+ab=x$, then $x$ would in fact be a square, contradiction. Now we appeal to \cite[Thm. 2.9]{adhsz}. 
When there is some even $\alpha_i$ (``case $n=0$'' in \cite{adhsz}), we easily check that these two sets in (\ref{eq:square}) are equal. This is particularly immediate when there is an $\alpha_i$ divisible by 4, and the mod 2 cohomology ring of $Y$ is isomorphic to that of a connect sum of some copies of $\R\mathbb{P}^3$ and some copies of $S^1\times S^2$. Finally, if all $\alpha_i$ are odd and $b+\sum \beta_i \equiv 1 \mod 2$, then the ring is isomorphic to that of a connect sum of $2g$ copies of $S^1\times S^2$, whence by the same reasoning $v_Y(x)=0$.
\end{proof}
\vspace{.30cm}

\begin{proof}[Proof of Proposition \ref{prop:main}] First, since $b_1(Y)$ is equal to either $2g$ or $2g+1$, $v_Y(x)$ is even by Corollary \ref{cor:main} unless $g=1$. By the above lemma, it remains to check that $v_Y(x)\equiv 1\mod 2$ when $g=1$ and all $\alpha_i$ are odd and $b+\sum \beta_i \equiv 0 \mod 2$. One can conclude from \cite{adhsz} that $H^1(Y;\Z_2)$ has a basis $a,b,c$ with $a^2=b^2=0$ and non-zero products $ab$, $bc$, $ac$, the three of which provide a basis for $H^2(Y;\Z_2)$. Depending on some divisibility conditions on the $\beta_i$, either $c^2=0$ or $c^2=ab$. The element $ac$, for one, is never a square, so we set $x=ac$. In either case we compute $v_Y(x)=1$.
\end{proof}
\vspace{.85cm}

\noindent {\bf{Connected sums.}} Now let $x$ be any element of $H^2(Y;\Z_2)$. Recall that $V_Y$ may be viewed as $\text{Hom}(\pi_1(Y),V_4)$ modulo the action of $S_3=\text{Aut}(V_4)$. As such, it makes sense to keep track of the $S_3$-stabilizers of the orbits. For a set $X$ with $S_3$-action we define the triple $\check{v}(X)=(\check{v}_1,\check{v}_2,\check{v}_3)$ where $\check{v}_1$, $\check{v}_2$, $\check{v}_3$ are the numbers of orbits with stabilizers of orders 1, 2, 6, respectively. For two such sets $X_1$ and $X_2$ with $S_3$-actions we have
\[
	\check{v}({X_1\times_{S_3} X_2}) = \check{v}({X_1})\times \check{v}({X_2})
\]
where we define the product $\times$ on triples as follows:

%$\check{v} =\check{v}_Y(x) = (\check{v}^1 , \check{v}^2 ,\check{v}^3)$. \\
\[
	\check{v}\times \check{u} := (6\check{v}_1\check{u}_1+3\check{v}_1\check{u}_2 + 3 \check{v}_2\check{u}_1+\check{v}_1\check{u}_3+\check{v}_3\check{u}_1 + \check{v}_2\check{u}_2,\;\;\; \check{v}_2\check{u}_2 + \check{v}_2\check{u}_3 + \check{v}_3\check{u}_2,\;\;\; \check{v}_3\check{u}_3).
\]
Define the norm of a triple to be the $L^1$-norm: $|\check{v}| = \check{v}_1 + \check{v}_2 + \check{v}_3$. 
%\[
%    |\check{v}\times\check{u}|  \equiv  \check{v}_1\check{u}_1+ |\check{v}|\cdot |\check{u}|\mod 2.
%\]
Write $\check{v}_Y(x)$ for the triple $\check{v}(X)$, with $X$ the subset of $\text{Hom}(\pi_1(Y),V_4)$ that lives on an $SO(3)$ bundle $E$ with $x=w_2(E)$. Thus $X/S_3$ is the subset of $\{a,b,c\}\in V_Y$ such that $ab+bc+ac=x$. With our new notation, we have
\[
	v_Y(x) = |\check{v}_Y(x)|.
\]
Now, given $x_i\in H^2(Y_i;\Z_2)$ it is easy to verify the connect sum relation
\[
	v_{Y_1\# Y_2}(x_1+x_2)  = |\check{v}_{Y_1}(x_1)\times \check{v}_{Y_2}(x_2)|.
\]
Note also that if $x$ is not a cup-square, then $\check{v}_Y(x)$ has the form
\[
    \check{v}_Y(x) = (\check{v}_1,0,0).
\]
In general, the third entry $\check{v}_3$ is equal to 1 if and only if $x=0$, and is otherwise $0$. Also, the second entry $\check{v}_2$ is the number of non-trivial cup-square-roots of $x$:
\[
    \check{v}_2 = |\{a\in H^1(Y;\Z_2):\; a\neq 0, \; a^2 = x\}|, \;\;\text{ where } \check{v}_Y(x) = (\check{v}_1,\check{v}_2,\check{v}_3).
\]
In particular, the sum $\check{v}_2+\check{v}_3$ is either zero or the cardinality of the kernel of the Bockstein map $H^1(Y;\Z_2)\to H^2(Y;\Z_2)$, which is by definition $2^{k(Y)}$. Putting these observations together, and using our freedom to choose $x$ that is not a square (below choose $x_2=0$), we compute the following:\\

\begin{prop}
Suppose $x_i\in H^2(Y_i;\Z_2)$ and that $x_1$ is not a cup-square. Then
\[
    v_{Y_1\# Y_2}(x_1+x_2) \equiv \begin{cases}
    v_{Y_1}(x_1)\mod 2, & k(Y_2)=0\\
    0 \mod 2, & \text{otherwise}
    \end{cases}
\] \label{prop:2}
\end{prop}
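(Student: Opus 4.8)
The plan is to obtain Proposition~\ref{prop:2} as a direct consequence of the connect sum relation
\[
	v_{Y_1\# Y_2}(x_1+x_2)  = |\check{v}_{Y_1}(x_1)\times \check{v}_{Y_2}(x_2)|
\]
together with the structural facts just established about the triples $\check{v}_Y(x)$. First I would record the hypothesis: since $x_1$ is not a cup-square, the observation above gives $\check{v}_{Y_1}(x_1) = (\check{v}_1, 0, 0)$ with $\check{v}_1 = v_{Y_1}(x_1)$. Write $\check{v}_{Y_2}(x_2) = (\check{u}_1, \check{u}_2, \check{u}_3)$ for the second factor. Then the product formula for triples, specialized to the case where the first factor has vanishing second and third entries, collapses dramatically: only the terms $6\check{v}_1\check{u}_1$, $3\check{v}_1\check{u}_2$, and $\check{v}_1\check{u}_3$ survive in the first slot, and the second and third slots of the product are identically zero. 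Hence
\[
	v_{Y_1\#Y_2}(x_1+x_2) = |\check{v}_{Y_1}(x_1)\times\check{v}_{Y_2}(x_2)| = 6\check{v}_1\check{u}_1 + 3\check{v}_1\check{u}_2 + \check{v}_1\check{u}_3 = \check{v}_1\left(6\check{u}_1 + 3\check{u}_2 + \check{u}_3\right).
\]

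Next I would analyze the parity of the cofactor $6\check{u}_1 + 3\check{u}_2 + \check{u}_3 \equiv \check{u}_2 + \check{u}_3 \pmod 2$. By the observations preceding the proposition, $\check{u}_2 + \check{u}_3$ is either $0$ or $2^{k(Y_2)}$: it is $0$ precisely when $x_2$ admits no cup-square-root (i.e.\ $x_2$ is not in the image of the Bockstein, counting $0$ as a root only when $x_2=0$), and otherwise it equals $|\ker\beta^1| = 2^{k(Y_2)}$. I would then split into the two cases of the statement. If $k(Y_2) = 0$, then $\ker\beta^1$ is trivial, so the only possible cup-square-root situation is $x_2 = 0$, for which $\check{u}_3 = 1$ and $\check{u}_2 = 0$, giving cofactor parity $1$; if instead $x_2 \neq 0$ it has no root at all and the cofactor is even — but wait, with $k(Y_2)=0$ a nonzero $x_2$ genuinely has $\check u_2+\check u_3=0$. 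To get a clean statement one notes that when $k(Y_2)=0$, in either sub-case $v_{Y_1\#Y_2}(x_1+x_2) \equiv \check{v}_1 \cdot (\check u_2 + \check u_3) \pmod 2$, and since we are free to absorb the choice of $x_2$ (the proposition allows $x_2=0$, for which $\check u_2+\check u_3 = 1$), this recovers $v_{Y_1\#Y_2}(x_1) \equiv v_{Y_1}(x_1)$. If $k(Y_2) \geq 1$, then whenever $\check{u}_2 + \check{u}_3 \neq 0$ it equals $2^{k(Y_2)}$, which is even, so the cofactor is even and $v_{Y_1\#Y_2}(x_1+x_2) \equiv 0 \pmod 2$ regardless of the value of $\check{v}_1$.

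The genuinely delicate point — and the step I expect to require the most care — is the bookkeeping around the $x_2 = 0$ versus $x_2 \neq 0$ distinction when $k(Y_2) = 0$, since the formula $\check{u}_2 + \check{u}_3 \in \{0, 2^{k(Y_2)}\}$ does not by itself pin down the parity uniformly; one must invoke the freedom, noted in the paragraph before the proposition, to take $x_2 = 0$, so that the statement really is about $v_{Y_1\#Y_2}(x_1)$. Everything else is the routine substitution of the collapsed product formula and reading off parities from the two structural identities $\check{v}_3 = [x=0]$ and $\check{v}_2 + \check{v}_3 \in \{0, 2^{k(Y)}\}$. I would keep the write-up to a few lines, citing the connect sum relation and the two bullet-point observations, and simply present the case split.
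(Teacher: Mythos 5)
Your reduction via the connect sum relation to
\[
v_{Y_1\#Y_2}(x_1+x_2)=\bigl|(\check v_1,0,0)\times(\check u_1,\check u_2,\check u_3)\bigr|=\check v_1\left(6\check u_1+3\check u_2+\check u_3\right)\equiv \check v_1\left(\check u_2+\check u_3\right)\pmod 2
\]
is exactly the paper's (essentially one-line) argument, and your handling of the case $k(Y_2)\geq 1$ is correct. The gap is in the case $k(Y_2)=0$, $x_2\neq 0$: you assert that such an $x_2$ has no cup-square root, so that $\check u_2+\check u_3=0$. That is false, and if it were true it would contradict the proposition, since it would give $v_{Y_1\#Y_2}(x_1+x_2)\equiv 0$ rather than $\equiv v_{Y_1}(x_1)$. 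The point you are missing is that cup-squaring $a\mapsto a^2=\beta^1(a)$ is a $\Z_2$-\emph{linear} map from $H^1(Y_2;\Z_2)$ to $H^2(Y_2;\Z_2)$, and these two spaces have the same dimension $b_1(2)$ (Poincar\'e duality plus universal coefficients). Hence $k(Y_2)=0$, i.e.\ $\ker\beta^1=0$, forces squaring to be a bijection: \emph{every} $x_2$ is a cup-square with exactly one root, so $\check u_2+\check u_3=1$ for every $x_2$, and the cofactor is odd in all sub-cases. (Concretely, for $Y_2=\R\mathbb{P}^3$ and $x_2=a^2\neq 0$ one has $\check v_{Y_2}(x_2)=(0,1,0)$, cofactor $3$.)

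Your attempted patch---``absorbing the choice of $x_2$'' by taking $x_2=0$---does not repair this: the proposition is a statement about an arbitrary $x_2\in H^2(Y_2;\Z_2)$, and verifying it for $x_2=0$ says nothing about the other values. The clean statement of the structural fact you need is: $\check u_2+\check u_3=|\{a\in H^1(Y_2;\Z_2):a^2=x_2\}|$, which equals $0$ if $x_2$ is not a cup-square and equals $|\ker\beta^1|=2^{k(Y_2)}$ if it is; and when $k(Y_2)=0$ the first alternative never occurs. With that correction your computation goes through verbatim and yields both cases of the proposition.
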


\noindent In particular, we recover the fact (mod 2) that $v_Y(x)$ is stable under connect summing with $\R\mathbb{P}^3$. More generally, these statements clearly hold when the decompositions are only algebraic, instead of geometric: for example, if there is a decomposition $H^1(Y;\Z_2)=A\oplus B$ where $A\cup B=0$ and $B$ has an element of order 4 or $\infty$, then $v_Y(x)\equiv 0$ (mod 2) for any $x$ not a cup-square.\\

\vspace{.85cm}

\noindent {\textbf{Double branched covers.}} The above Seifert-fibered examples for genus $g=0$ are double branched covers of Montesinos links, but in all of those cases $v_Y(x)$ vanishes (mod 2) for non-squares $x$. Here we compute a non-vanishing example in which $Y$ is a double branched cover $\Sigma(L)$ of a link $L$ in $S^3$. First, we describe the $\Z_2$ cohomology rings of such manifolds. Let $L$ be a link with components $L_1,\ldots L_n$, and let $S_i$ be a Seifert surface for $L_i$. Then $S_i$ lifts to a closed surface $F_i$ in the branched cover $\Sigma(L)$. Write $a_i\in H^1(\Sigma(L);\Z_2)$ for the Poincar\'{e} dual of $[F_i]$.\\

\begin{prop}
Let $L$ be an $n$ component link. The vector space $H^1(\Sigma(L);\Z_2)$ has dimension $n-1$, and it is generated by the $n$ classes $a_i$ subject to the one relation
\begin{equation}
    a_1 + \cdots + a_n = 0.\label{eq:reln}
\end{equation}
The triple cup product form on $H^1(\Sigma(L);\Z_2)$ is determined by the values
\[
    (a_i\cup a_j \cup a_k)[\Sigma(L)] \equiv  \begin{cases}
    \sum_{\ell\neq i}\text{{\emph{lk}}}(L_i,L_\ell) \mod 2, & i=j=k,\\
    \text{{\emph{lk}}}(L_i,L_k) \mod 2, & i=j\neq k,\\
    0 \mod 2, & i,j,k\text{  distinct}
    \end{cases}
\]\label{prop:cupring}
\end{prop}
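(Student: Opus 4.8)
\textbf{Proof proposal for Proposition \ref{prop:cupring}.}

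The plan is to use the classical description of the double branched cover $\Sigma(L)$ via a Seifert surface and the Goeritz/linking form, but it is cleaner to argue directly with the surfaces $F_i$ and intersection theory in $\Sigma(L)$. First I would establish the statement about $H^1(\Sigma(L);\Z_2)$. Since $H_1(\Sigma(L);\Z)$ has order equal to $|\det(L)|$ when $L$ has one component but in general $\Sigma(L)$ over an $n$-component link has $b_1(\Sigma(L)) = n-1$, I would instead recall the standard Mayer--Vietoris or branched-cover computation: writing $\Sigma(L) \to S^3$ as the branched double cover, the preimage of $S^3 \setminus L$ is a connected double cover determined by the mod $2$ total linking homomorphism $\pi_1(S^3\setminus L) \to \Z_2$, and a Mayer--Vietoris argument for $\Sigma(L) = N(\widetilde{L}) \cup (\text{unbranched part})$ gives $H^1(\Sigma(L);\Z_2) \cong \Z_2^{n-1}$. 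The classes $a_i = \mathrm{PD}[F_i]$ span because $[F_i]$ are the meridional classes dual to the branch locus components, and the single relation $\sum a_i = 0$ comes from the fact that the sum of the Seifert surfaces $S_i$, pushed into the branched cover, bounds: $\sum [F_i]$ is the preimage of a Seifert surface for all of $L$ which is separating downstairs, hence null-homologous mod $2$ (alternatively, $\sum a_i$ is the pullback of the generator of $H^1(S^3\setminus L;\Z_2)$ classifying the double cover, which dies in $\Sigma(L)$).

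Next I would compute the triple cup product by Poincar\'e duality: $(a_i \cup a_j \cup a_k)[\Sigma(L)]$ equals the mod $2$ count of triple intersection points $F_i \cap F_j \cap F_k$, after putting the surfaces in general position. When $i,j,k$ are distinct, I would argue the $F_i$ can be made disjoint: the Seifert surfaces $S_i$ for distinct components can be chosen so that $S_i \cap S_j \cap S_k = \emptyset$ for distinct indices (a generic position statement in $S^3$), and lifting preserves this, giving $0$. For the case $i = j \neq k$, the self-intersection contribution $a_i^2 \cup a_k$ is computed by perturbing $F_i$ to a transverse copy $F_i'$ and counting $F_i \cap F_i' \cap F_k$; this reduces to the intersection of the framing curve of $F_i$ (the pushoff determined by $F_i$, which upstairs corresponds to the $0$-framing pushoff of $L_i$ doubled/halved appropriately) with $F_k$, and that count is $\mathrm{lk}(L_i, L_k) \bmod 2$ since $F_k$ downstairs caps off $L_k$ and the relevant pushoff of $L_i$ links $L_k$ exactly $\mathrm{lk}(L_i,L_k)$ times. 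Finally the totally diagonal term $a_i^3$ is forced by the Bockstein/linking-form identity: using the relation $\sum a_j = 0$ to write $a_i = \sum_{j\neq i} a_j$ and expanding $a_i^3 = a_i^2 \cup a_i = a_i^2 \cup \sum_{j\neq i}a_j = \sum_{j\neq i} \mathrm{lk}(L_i,L_j)$, which is the claimed formula; alternatively one invokes $\beta^1(a_i) = a_i^2$ together with the known linking form of $\Sigma(L)$ in terms of the Goeritz matrix.

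I expect the main obstacle to be the self-intersection case $a_i^2 \cup a_k$: one must pin down precisely which pushoff of $F_i$ is ``the'' normal framing and check that its intersection with $F_k$ is genuinely the mod $2$ linking number rather than that number plus a correction from the branch locus. The safest route is probably to avoid choosing framings by hand and instead deduce the diagonal and semi-diagonal values simultaneously from the relation $\sum a_i = 0$ plus the off-diagonal vanishing plus one auxiliary input — namely the mod $2$ linking form of $\Sigma(L)$, which is standard — so that only the ``distinct indices give $0$'' computation and the dimension count need to be done from scratch. Once those two facts are in hand, the rest is linear algebra in the ring $H^\ast(\Sigma(L);\Z_2)$ using $\beta^1(a)=a^2$ and the identity (\ref{eq:star2}) relating triple cup products to the linking form.
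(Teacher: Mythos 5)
Your overall skeleton (the dimension count, the relation $\sum a_i=0$ via the classifying class of the double cover, and the interpretation of triple cup products as triple intersection counts) is sound, and the fully diagonal case does follow from the semi-diagonal one via the relation exactly as you say. But two steps as written do not hold up. First, for distinct $i,j,k$ you claim the $S_i$ can be chosen with $S_i\cap S_j\cap S_k=\emptyset$ as ``a generic position statement''; general position for three surfaces in a 3-manifold gives a \emph{finite} set of triple points, not an empty one, so this is not a general position statement. The vanishing is still true and has a one-line fix: away from the branch locus the map $\Sigma(L)\to S^3$ is an honest double cover, and generically no triple point of $S_i,S_j,S_k$ lies on $L$, so the triple points of $F_i,F_j,F_k$ come in deck-transformation pairs and the mod 2 count is zero.

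Second, and more seriously, the case $i=j\neq k$ --- which you correctly identify as the crux --- is not actually established by either of your two routes. The pushoff argument requires pinning down $w_1$ of the normal bundle of $F_i$, which is non-orientable in general, with contributions both from the branch circle $\widetilde{L}_i\subset F_i$ and from the branch points lying over $S_i\cap L_j$; you leave this unresolved. The fallback via the linking form and (\ref{eq:star2}) defers the entire content to the identity $L_{\Sigma(L)}(a_i^\dagger,a_k^\dagger)=\psi(\mathrm{lk}(L_i,L_k))$, which is not more standard than the proposition itself and would need its own Goeritz-matrix bookkeeping (and, when $\det L=0$, care about which part of $a_i$ the torsion pairing sees). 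The paper's sketch avoids self-intersections entirely: it identifies $H_1(\Sigma(L);\Z_2)$ with the even-cardinality subsets of $\{1,\dots,n\}$, the class $\{i,j\}$ being represented by the lift of an arc joining $L_i$ to $L_j$; it then shows that for $i\neq k$ the 1-cycle $F_i\cap F_k$ is mod 2 homologous to $\mathrm{lk}(L_i,L_k)\cdot\{i,k\}$, and evaluates $a_m$ on $\{i,k\}$, getting $1$ for $m\in\{i,k\}$ (the lifted arc crosses $F_m$ once, at the branch locus) and $0$ otherwise. By symmetry of the cup product, $(a_i\cup a_i\cup a_k)[\Sigma(L)]=a_i\left([F_i\cap F_k]\right)$, so this single computation handles the semi-diagonal and fully distinct cases at once without ever pushing a surface off itself. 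I would recommend replacing your treatment of the $i=j\neq k$ case with that computation of $[F_i\cap F_k]$ as the key lemma.
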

\vspace{.20cm}

\noindent This proposition is proved for two-component links in \cite[Prop. 9.2]{ps}, and the proof easily generalizes. We sketch the argument. To begin, we mention that $H_1(\Sigma(L);\Z_2)$ is in bijection with the subsets of $\{1,\ldots,n\}$ of {\emph{even}} cardinality: 
\begin{equation}
    H_1(\Sigma(L);\Z_2) \;\; \overset{1:1}\longleftrightarrow \;\;  \left\{\begin{array}{c} S \subset \{1,\ldots,n\}\\  |S| \equiv 0 \mod 2\end{array}\right\}.\label{eq:bijectionsets}
\end{equation}
The bijection goes as follows. Given such a subset, pair off elements. For the pair $\{i,j\}$, draw an arc in $S^3$ between components $L_i$ and $L_j$, otherwise missing $L$. Lift the arcs to a union of loops in $\Sigma(L)$ to obtain a class in $H_1(\Sigma(L);\Z_2)$. Now, assume the $F_i$ are transverse to one another. Then it is not hard to see, when $i\neq j$, that $F_i\cap F_j$ is mod 2 homologous to 
\[
    \text{lk}(L_i,L_j)\cdot \{i,j\}
\]
where we view $\{i,j\}$ as an element of $H_1(\Sigma(L);\Z_2)$ via the above bijection. Upon taking Poincar\'{e} duals, this yields the proposition. We note that addition on the subsets appearing on the right side of (\ref{eq:bijectionsets}) is the symmetric difference of sets.

\begin{figure}[t]
\centering
\includegraphics[scale=.50]{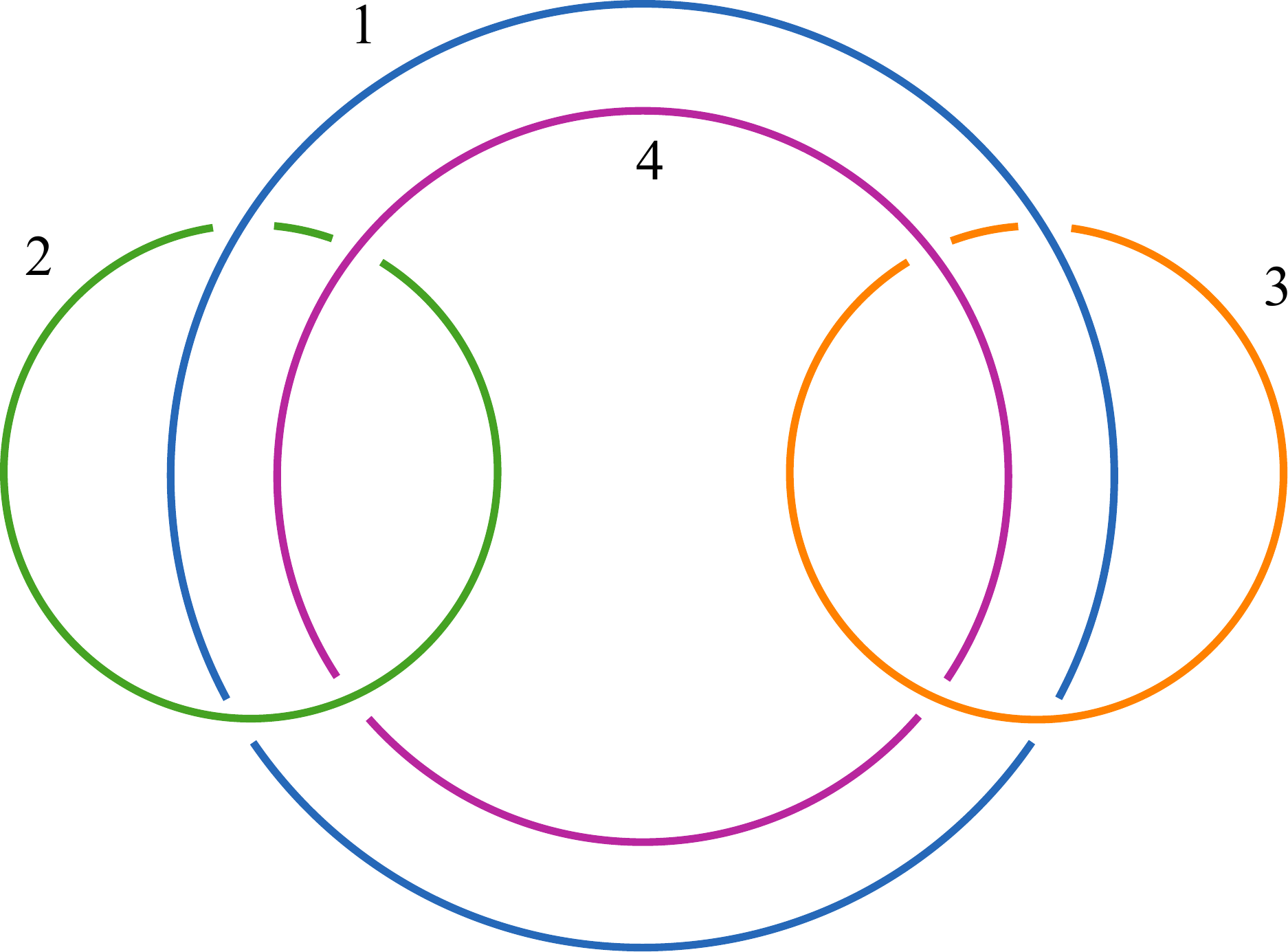}
% \begin{tikzpicture}[scale=2.5]
% \begin{knot}[
%     clip width=4,
%     ]
%     \strand [ultra thick, Green] (-.5,0) circle (0.60cm);
%     \strand [ultra thick, YellowOrange] (1.5,0) circle (0.60cm);
%     \strand [ultra thick, 	NavyBlue] (0.5,0) circle (1.1cm);
%     \strand [ultra thick, 	Purple] (0.5,0) circle (0.85cm);
%     \flipcrossings{1,3,5,7}
% \end{knot}
% 	\node[] (n1) at (-0.25,1.0) {\bf{$L_1$}};
% 	\node[] (n2) at (-1.15,-0.45) {\bf{$L_2$}};
% 	\node[] (n3) at (2.1,-0.45) {\bf{$L_3$}};
% 	\node[] (n4) at (0.5,0.7) {\bf{$L_4$}};
% \end{tikzpicture} 
\caption[]{{\small{
The link $L=$L8n8 with its four components labelled by $\{1,2,3,4\}$. This link has determinant zero and thus its branched double cover supports non-trivial admissible bundles.
}}}\label{fig:l8n8}
\end{figure}

Let $Y=\Sigma(L)$, and let $\func$ be the function from $V_Y$ to $H_1(Y;\Z_2)$ that sends a flat Klein-four connection class to the Poincar\'{e} dual of its second Stiefel-Whitney class:
\[
    \func\{a,b,c\} = \text{PD}(ab + bc + ac).
\]
Let $L$ be the four component link L8n8 depicted in Figure \ref{fig:l8n8}, and let $a_i$ be the classes described in Proposition \ref{prop:cupring} for $L$, so that $a_i$ is dual to the lifted Seifert surface of $L_i$. In particular, $a_1,a_2,a_3$ form a basis for $H^1(Y;\Z_2)$. For illustration, using Proposition \ref{prop:cupring} we compute:
\[
    \text{PD}(a_1^2) = \text{PD}\left(a_1 (a_2 + a_3 + a_4)\right) = \sum_{i=2}^4 \text{lk}(L_1,L_i)\cdot\{1,i\} = \{1,2\} + \{1,3\} = \{2,3\}.
\]
The bijection (\ref{eq:bijectionsets}) is implicit in our notation, aligning subsets of $\{1,2,3,4\}$ of even size with elements of $H_1(Y;\Z_2)$. We then compute $f$ on all fifteen of the Klein-four connection classes in $V_Y$:

\begin{align*}
 \func\{0,0,0\} &= 0, & \func\{a_1,a_2,a_1+a_2\} &= {\color{WildStrawberry}\{3,4\}},\\
 \func\{a_1,a_1,0\} &= \{2,3\}, & \func\{a_1,a_3,a_1+a_3\} &= {\color{WildStrawberry}\{2,4\}},\\
 \func\{a_2,a_2,0\} &= \{1,4\}, & \func\{a_2,a_3,a_2+a_3\} &= 0,\\
 \func\{a_3,a_3,0\} &= \{1,4\}, & \func\{a_1,a_2+a_3,a_1+a_2+a_3\} &= 0,\\
 \func\{a_1+a_2,a_1+a_2,0\} &= \{1,2,3,4\}, & \func\{a_2,a_1+a_3,a_1+a_2+a_3\} &= {\color{WildStrawberry}\{1,3\}},\\
 \func\{a_1+a_3,a_1+a_3,0\} &= \{1,2,3,4\}, & \func\{a_3,a_1+a_2,a_1+a_2+a_3\} &= {\color{WildStrawberry}\{1,2\}},\\
 \func\{a_2+a_3,a_2+a_3,0\} &= 0, & \func\{a_1+a_2,a_1+a_3,a_2+a_3\} &= 0,\\
 \func\{a_1+a_2+a_3,a_1+a_2+a_3,0\} &= \{2,3\}. & & \\
\end{align*}

\noindent We find that the cup-squares form a 2-dimensional subspace of $H^2(Y;\Z_2)$, appearing as the outputs of the left-hand column. Thus $k(Y)= 1$. We have four non-squares, appearing as the non-zero entries (in red) in the right-hand column. Each has one Klein-four class, and so $v_Y(x)\equiv 1$ (mod 2) when $x$ is not a cup-square. The link $L$ has determinant zero, i.e. $b_1(Y)>0$, so $Y$ has a non-trivial admissible $U(2)$ bundle $E$. By Theorem \ref{thm:main} we conclude
\[
    \lambda(Y,E) \equiv 1 \mod 2.
\]
Proposition \ref{prop:cupring} similarly computes the parity of $2^{4-n}\lambda(Y,E)$, when $\det(L)=0$, from only knowing the mod 2 linking matrix of $L$.
\vspace{.35cm}

\bibliography{main.bbl}
\bibliographystyle{alpha}

\vspace{.85cm}

\footnotesize

  \textsc{Department of Mathematics, Brandeis University,
    Waltham, MA}\par\nopagebreak
  \textit{E-mail address:}\;\texttt{cscaduto@brandeis.edu}

\vspace{.35cm}

    \textsc{Department of Mathematics, University of California,
    Los Angeles, CA}\par\nopagebreak
  \textit{E-mail address:}\;\texttt{mstoffregen@math.ucla.edu}

\end{document}